\def\kernel{\zeta}
\def\odkernel{\eta}
\def\odkernelbase{\phi}
\def\solution{\psi}
\def\R{\mathbb{R}}
\def\N{\mathbb{N}}
\def\Z{\mathbb{Z}}
\def\cK{\mathcal{K}}
\def\00{\mathbf{0}}
\def\11{\mathbf{1}}
\def\22{\mathbf{2}}
\def\33{\mathbf{3}}
\def\44{\mathbf{4}}
\def\55{\mathbf{5}}
\def\66{\mathbf{6}}
\def\77{\mathbf{7}}
\def\88{\mathbf{8}}
\def\99{\mathbf{9}}
\newtheorem{theorem}{Theorem}[section]
\newtheorem{lemma}[theorem]{Lemma}
\newtheorem{proposition}[theorem]{Proposition}
\newtheorem{definition}[theorem]{Definition}
\newtheorem{remark}[theorem]{Remark}
\def\qed{\hfill$\Box$} 
\newenvironment{proof}{{\noindent \bf 
Proof:}}{\hfill\qed\bigskip}
\begin{document}

\renewcommand{\theequation}{\arabic{section}.\arabic{equation}}

\title{A kernel-based discretisation method for first order
  partial differential equations of evolution type} 

\author{Tobias
  Ramming\thanks{tobias.ramming@uni-bayreuth.de}\\Department of
  Mathematics\\ University 
    of Bayreuth\\ D-95440 Bayreuth\\ Germany
\and Holger Wendland\thanks{holger.wendland@uni-bayreuth.de}\\
     Department of Mathematics\\
        University of Bayreuth\\
        D-95440 Bayreuth\\ Germany
}

\maketitle
\begin{abstract}
We derive a new discretisation method for first
order PDEs of arbitrary spatial dimension, which is based upon a
meshfree spatial approximation. This spatial approximation is similar
to the SPH (smoothed particle 
hydrodynamics) technique and is a typical kernel-based method. It
differs, however, significantly from the SPH method since it employs
an Eulerian and not a Lagrangian approach. We
prove stability and convergence for the resulting semi-discrete scheme under
certain smoothness assumptions on the defining function of the PDE. The
approximation order depends on the underlying kernel and the
smoothness of the solution. Hence, we also review an easy way of
constructing smooth kernels yielding arbitrary convergence
orders. Finally, we give a numerical example by testing our method in
the case of a one-dimensional Burgers equation. 
\end{abstract}

\section{Introduction}

In this paper, we will derive and analyse a new discretisation method for
a large class of first order evolution equations, i.e. we are
interested in finding approximate solutions to initial value problems of
the form 
 \begin{eqnarray}
\partial_t \rho + f(t,x,\rho,\nabla\rho) &=&0\qquad \text{on }(0,\infty)
  \times\R^n \label{problem_equation}\\  
\rho(0,\cdot)&=&\rho_0\qquad \text{on }\R^n. \label{problem_initialValue}
\end{eqnarray}
Here, $f:\R^{2n+2}\to\R$ is a given, twice continuously differentiable
mapping, $\rho_0\in C^r(\R^n)$ is the given initial condition and
$\nabla\rho$ denotes as usual the vector of first order spatial
derivatives of $\rho$. The function $\rho:[0,\infty)\times\R^n\to\R$
is the solution we want to compute and we will assume that the above
problem has a solution $\rho\in C^{1,r}([0,\infty)\times\R^n)$, i.e.~a solution which 
has at least first order continuous derivatives in time and $r$-th order continuous derivatives in space with $r\ge 1$. 

These somewhat strong conditions are required for our error 
analysis. The numerical scheme itself can be set up under 
much milder conditions. Nonetheless, 
as usual the efficiency of 
the scheme is based upon
the assumption that we have a strong solution to the problem.

First order problems of the above type occur in many
different situations, often when modelling physical phenomena. The
most simple example is given by the well known {\em linear transport
  problems} 
\[
\partial_t \rho + u\cdot \nabla \rho = g
\]
with a given drift $u:[0,\infty)\times\R^n\to\R^n$ and a source term
  $g:[0,\infty)\times\R^n\to\R$. Further examples range from the
{\em nonlinear evolution problems
      of Hamilton-Jacobi type}, given by equations of the form
\[
\partial_t \rho + H(x,\nabla \rho) = 0
\]
with given Hamiltonian function $H:\R^{2n}\to\R$ to problems from
{\em optimal control and dynamical programming}, where the control usually 
also has to satisfy a given (often physically motivated) partial 
differential equation.

One specific possible application we have in mind is the determination
of the basin of attraction of a system of ordinary differential
equations $\dot{x}(t)=f(t,x(t))$. Here, a feasible approach is to set
up a first order partial differential equation for the Lyapunov
function $L(x,t)$, which then has to satisfy
\[
\nabla L(t,x)\cdot f(t,x) + \partial_t L(t,x) = -g(t,x)
\]
with a given function $g$. For a recent review see
\cite{Giesl-Hafstein-15-2}. Since the function $g$ can be chosen by
the user, it is possible to have a smooth solution such that this
particular type of problem is indeed covered by our convergence
analysis below.

Numerically, equations of the form (\ref{problem_equation}),
(\ref{problem_initialValue})  are typically solved by classical
finite differences, semi-Lagrangian approximation schemes, level
set methods or by finite elements using an approach based upon
viscosity solutions \cite{Falcone-Ferretti-98-1, Falcone-Ferretti-14-1}.

However, some of these problems do not exhibit classical smooth
solutions. This is particularly the case when scalar, non-linear
conservation laws of the form
\[
 \partial_t \rho + \nabla\cdot f(\rho) = 0
\]
with a flux function $f:\R\rightarrow\R^n$ are considered. Here it is
well known that, even for smooth initial data, the solutions can
develop shocks in finite time (cf.~\cite{Bressan-09-1}) or an even worse
non-smooth behaviour. 

The numerical simulation of such conservation laws and the handling of
these non-smooth solutions has attracted considerable attention within
the last years. In particular weighted essentially non-oscillatory
(WENO) schemes and discontinuous Galerkin schemes have proven to work
quite well - at least  in one spatial dimension
(cf.~\cite{LeVeque-92-1, Shu-09-1, Shu-13-1} and the
references therein). However, there are still considerable problems   
when it comes to treating non-smooth data in the multivariate setting,
since most of the known schemes are more or less extensions of the
one-dimensional  schemes and show spurious effects if the shocks and
other discontinuities are not aligned with the mesh
\cite{Eymann-Roe-13-1}. As a consequence, there is a need for
suitable, genuinely multivariate methods. While the method we want to
analyse in this paper is still far from tackling these kind of
problems, the method can hopefully be extended in such a direction in
the future.

In this paper, we propose a new spatial discretisation method, which,
in a certain way, uses techniques from classical particle methods such
as SPH (smoothed 
particle hydrodynamics), see for example
\cite{Benmoussa-Vila-00-1,Violeau-12-1,Vila-99-1,Benmoussa-06-1,
  Monaghan-05-1,Monaghan-12-1}. For this reason, our analysis  will partially
employ ideas of earlier works such as \cite{Cortez-97-1,
  Raviart-85-1}. However, our method differs from these methods
significantly since we do not use a Lagrangian approach as it is
usually done in this context. Instead, we employ an Eulerian 
approach. Hence, in a certain way, our method can also be seen as a
generalised finite difference method. 

While our analysis is based upon the fact that our spatial
discretisation points form a regular grid, it is our goal to extend
these results to arbitrary point sets later on.

The paper is organised as follows.  In the next section,
we will derive our discretisation scheme and state our main
convergence result. The third section is devoted to some approximation
results which extend results from \cite{Raviart-85-1}. The fourth
section then deals with the proof of our main convergence result. In
the fifth section, we give a general scheme to construct high order
kernels. The final section is devoted to a numerical example.

In this paper, we will only consider problems on all of $\R^n$ as the
spatial domain. From a practical point of view, this means that we
will, where necessary, assume that for a
fixed time interval $[0,T]$ there is a compact set
$\Omega\subseteq\R^n$ such that  the solution $\rho(t)=\rho(t,\cdot)$ of
(\ref{problem_equation}), (\ref{problem_initialValue}) has support
contained in $\Omega$ for all $t\in[0,T]$. We will choose $\Omega$
large enough such that no boundary effects occur. For the purpose of
our analysis, we will without restriction also assume that $\Omega$ is
convex. In this situation, we obviously only need to know the
defining function $f$ on the set
\begin{equation}\label{M}
M:=\{(t,x,\rho(t,x),\nabla \rho(t,x))\in\R^{2n+2} : t\in[0,T],
x\in\Omega\}\subseteq\R^{2n+2}, 
\end{equation}
which is also compact since $\rho$ is supposed to be a $C^1$
function. Again, for the purpose of our analysis, we will extend this
set $M$ to a bigger, convex set 
\begin{equation}\label{Mtilde}
\widetilde{M}:=[0,T]\times\Omega_1\times \Pi.
\end{equation}
Here $\Omega_1$ is the convex hull of $\cup_{x\in\Omega} B_1(x)$,
where $B_1(x)$ is the ball of radius $1$ and centre $x$. Moreover,
$\Pi$ is a convex, compact super set of $\{(\rho(t,x),\nabla
\rho(t,x)) : t\in[0,T], x\in\Omega\}\subseteq \R^{n+1}$.

Then we may assume that $f$ is smoothly defined on all of $\R^n$ but
has support in $\widetilde{M}$. Obviously, this is no restriction under
the given assumptions and we can modify any given $f$ that does not
satisfy this condition accordingly.


As usual, $W_p^k(\R^n)$ will denote the space of all functions having
weak derivatives up to order $k$ in $L_p$. Its norm will be denoted by
$\|\cdot\|_{W_p^k(\R^n)}$ and the semi-norm consisting only of the
derivatives of order $|\alpha|=k$ will be denoted by $|\cdot|_{W_p^k(\R^n)}$.

\section{The Discretisation Scheme and its Convergence}

Our discretisation scheme is a classical kernel-based approximation
scheme. We will employ kernels of the following form.

\begin{definition} \label{definition_kernel}
A continuous and bounded function $\zeta:\R^n\to\R$ is called a kernel
of order $k\ge 1$ if 
\begin{enumerate}
\item [(i)] $\zeta\in L_1(\R^n)$ with $\int_{\R^n} \zeta(x)dx = 1$,
\item [(ii)] $\int_{\R^n} x^\alpha \zeta(x)dx = 0$ for all $\alpha\in\N^n_0$
  with $1\leq |\alpha| \leq k-1$,
\item [(iii)] $\int_{\R^n}|x|^k |\zeta(x)|dx < \infty$.
\end{enumerate}

 For $k,s\in\N_0$ with $k\geq 1$, we define $\cK^{k,s}$
 to be the set of all such kernels $\zeta\in C^s(\R^n)\cap
 W_1^s(\R^n)$  of order $k$ and smoothness $s$. If $s=0$ we
 require $\zeta\in C(\R^n)\cap L_1(\R^n)$. Moreover, we define
 $\cK_c^{k,s}$ to be the subset of such kernels having compact support.
\end{definition}

Note that the second condition in Definition \ref{definition_kernel}
will only become active if $k\ge 2$.

So far, $\zeta$ is rather a function than a kernel. It is named a
kernel since it is used as a convolution kernel.
To be more precise, let $\varepsilon>0$ and a kernel
$\zeta$ as before be given and let us define the scaled version
 $\zeta_\varepsilon(x)=\varepsilon^{-n}\zeta(x/\varepsilon)$,
$x\in\R^n$. Then, any function $\rho:[0,\infty)\times \R^n\to\R$ can be
approximated by a convolution of the form
\begin{equation}\label{convolution}
\rho^{\varepsilon}(t,x) = (\rho*\zeta_\varepsilon)(t,x) = \int_{\R^n}
\rho(t,y)\zeta_\varepsilon(x-y)dy, \qquad (t,x)\in[0,\infty)\times \R^n,
\end{equation}
provided that the integral exists. Note that the convolution is only
taken with respect to the spatial variable and not with respect to the
time variable.

Next, in particle methods, the approximation $\rho^\varepsilon$ is
further approximated using a quadrature rule.  From now on, we will
use the notation 
\begin{itemize}
 \item $x_i = ih$, $i\in\Z^n$,
 \item $\rho_{x_i} = \rho_{ih} = \rho(\cdot,x_i)$, $i\in\Z^n$,
\end{itemize}
where $h>0$ is a given discretisation parameter.
Thus, applying a simple composite rectangular rule to the integral in
(\ref{convolution}) yields the new approximation
\[
[\rho]_x(t) := [\rho](t,x) := h^n\sum_{j\in\Z^n}
\rho_{jh}(t)\zeta_\varepsilon(x-x_j),\qquad (t,x)\in [0,\infty) \times\R^n. 
\]
For simplicity, we  will assume here that $\rho$ and $\zeta$ are
chosen such that the series is well defined. This is, for example, the
case if $\rho$ satisfies sufficient decay conditions or if $\zeta$
is compactly supported. In this case, $[\rho]$ defines a smooth
function in space where the smoothness is determined by the smoothness
of $\zeta$. It also defines a smooth function in time, where the
smoothness is now determined by the smoothness of $\rho$ in time.

Note that $[\rho]$ is  already defined if only a countable
number of time-dependent (or even constant) functions
$\rho_j:[0,\infty)\to\R$, $j\in\Z^n$, is given and $\rho_{jh}$ is
  replaced by $\rho_j$. This obvious observation can be used to
  compute an approximate solution to (\ref{problem_equation}) by
  reducing the original problem to the problem of 
finding approximate coefficients $\rho_j^{\varepsilon
  h}:[0,\infty)\to\R$.

Hence, we may define  a function  depending on time and space by
\begin{equation}\label{sphApprox_function}  
[\rho^{\varepsilon h}]_x(t) = [\rho^{\varepsilon  h}](t,x) =
h^n\sum_{j\in\Z^n} \rho_j^{\varepsilon h}(t)\zeta_\varepsilon(x-x_j),\quad
(t,x)\in[0,\infty)\times\R^n. 
\end{equation}
Provided that the kernel $\zeta$ is at least $|\alpha|$-times continuously
differentiable, spatial derivatives of this function are simply given
by 
\[
\partial^\alpha[\rho^{\varepsilon  h}](t,x) = h^n\sum_{j\in\Z^n} \rho_j^{\varepsilon
  h}(t)\partial^\alpha \zeta_\varepsilon(x-x_j), \qquad
(t,x)\in[0,\infty)\times\R^n. 
\]
In particular, we have
\begin{equation} \label{sphApprox_gradient} 
\nabla[\rho^{\varepsilon h}]_{x} (t)=\nabla[\rho^{\varepsilon
    h}](t,x) = h^n\sum_{j\in\Z^n} \rho_j^{\varepsilon 
  h}(t)\nabla \zeta_\varepsilon(x-x_j), \qquad (t,x)\in[0,\infty)\times\R^n.
\end{equation}
In the next section, we will discuss the approximation properties of
this approximation process. The results derived there are essential
for the proof of our main result; they are mainly technical
improvements of earlier results which can be found in  
\cite{Raviart-85-1}. 

Before that, we will state our approximation method and our main
convergence result.

If we restrict (\ref{problem_equation}) to the spatial points
$x_i=ih$, $i\in\Z^n$, the initial partial differential equation reduces to the system
of differential equations
\[
\dot{\rho}_{ih}(t) = -f(t,ih,\rho(t,ih),\nabla\rho(t,ih)), \qquad
(t,i)\in(0,\infty)\times \Z^n.
\]

We can now solve the latter equation by approximating  the expressions
$\rho$ and $\nabla\rho$ on the right-hand side  by their
approximations $[\rho^{\varepsilon h}]$ and $\nabla[\rho^{\varepsilon
    h}]$, respectively.

\begin{theorem} \label{main_result} Let $T>0$ and $I:=[0,T]$.
 Assume that the solution of problem
 \eqref{problem_equation}, \eqref{problem_initialValue} for $f\in
 C^2(I\times\R^{2n+1})$ and $\rho_0\in C_c^r(\R^n)$ satisfies $\rho\in
 C_c^{2,r}(I\times\R^n)$. 
Let $\zeta\in\mathcal K^{k,s}_c$ be a kernel of order $k\ge 1$.

 For $\varepsilon, h>0$ let $\{\rho^{\varepsilon h}_i\}_{i\in\Z^n}$ be
 the solution of the initial value problem 
 \begin{eqnarray*}
\dot\rho_i^{\varepsilon h}&=&-f(., ih, [\rho^{\varepsilon h}]_{ih},
         \nabla [\rho^{\varepsilon h}]_{ih}),\\
  \rho_i^{\varepsilon h}(0)&=& \rho_0(ih)
 \end{eqnarray*}
with $[\rho^{\varepsilon h}]_{ih}$ and
$\nabla[\rho^{\varepsilon h}]_{ih}$ as in \eqref{sphApprox_function}
and \eqref{sphApprox_gradient}, respectively. If the parameters obey
the relations
\[
r\geq \max\{k+1,\ell\}, \quad k\ge 2+\frac{n}{2}, \quad s>\ell>n\quad 
\text{and} \quad h\leq \varepsilon^{1+(3+\frac{n}{2})/{\ell}}
\]
then there is a constant $C=C(f,\rho,T,\zeta)>0$ independent of
$\varepsilon$ and $h$ such that for all $\alpha\in\N_0^n$ with
$|\alpha|\leq 1$ it holds that 
\[\|\partial^\alpha\rho-\partial^\alpha[\rho^{\varepsilon h}]\|_{L_\infty(\R^n)} \leq
 C\varepsilon^{-|\alpha|-\frac{n}{2}}\left(\varepsilon^k
 +\frac{h^{\ell}}{\varepsilon^{\ell+1}}\right)\leq  
 C\varepsilon^{k-|\alpha|-\frac{n}{2}}\] 
 uniformly on $[0,T]$.

\end{theorem}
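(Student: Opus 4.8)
The plan is to run a consistency-plus-stability argument of Lax type for the semi-discrete scheme, organised around the auxiliary reconstruction $[\rho](t,x)=h^n\sum_{j}\rho(t,jh)\zeta_\varepsilon(x-x_j)$ built from the \emph{exact} nodal values. For $|\alpha|\le 1$ I would split
\[
\partial^\alpha\rho-\partial^\alpha[\rho^{\varepsilon h}]=\partial^\alpha(\rho-[\rho])+\partial^\alpha([\rho]-[\rho^{\varepsilon h}]),
\]
so that the first summand is a pure approximation error, while the second is the propagated error of the computed coefficients and lives in the space spanned by the $\zeta_\varepsilon(\cdot-x_j)$, making it amenable to an inverse estimate. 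For the first summand I invoke the results of Section~3 (the refinement of \cite{Raviart-85-1}): writing $\rho-[\rho]=(\rho-\rho*\zeta_\varepsilon)+(\rho*\zeta_\varepsilon-[\rho])$, the order-$k$ moment conditions give $\|\partial^\alpha(\rho-\rho*\zeta_\varepsilon)\|_{L_\infty}\lesssim\varepsilon^k$ (differentiation commutes with convolution, so the order-$k$ estimate applies to $\partial^\alpha\rho$; this is where $r\ge k+1$ enters), while the composite rectangular-rule aliasing error contributes $\lesssim h^\ell\varepsilon^{-\ell-|\alpha|}$, using $s>\ell>n$ for smoothness and summability. This term is eventually dominated by the stability term and needs no inverse estimate.

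The core is the stability estimate for the coefficient error $e_j(t):=\rho(t,jh)-\rho_j^{\varepsilon h}(t)$, with $e(0)=0$. Subtracting the exact nodal identity $\dot\rho(\cdot,jh)=-f(\cdot,jh,\rho,\nabla\rho)$ from the scheme and linearising $f$ by the mean value theorem (legitimate after a continuation argument keeping all arguments in the compact set $\widetilde M$ on which $f\in C^2$ is Lipschitz with $\varepsilon$-independent constants) yields
\[
\dot e=-D_0\,(\delta+Ae)-D_1\!\cdot(\delta'+Be),
\]
where $D_0,D_1$ are diagonal multipliers built from the $\rho$- and $\nabla\rho$-derivatives of $f$ at intermediate points, $\delta,\delta'$ are the nodal consistency errors of value and gradient, and $A,B$ are the discrete convolution operators with entries $h^n\zeta_\varepsilon((j-k)h)$ and $h^n\nabla\zeta_\varepsilon((j-k)h)$. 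A symbol (Poisson summation) computation gives $\|A\|_{\ell_2\to\ell_2}=O(1)$ but $\|B\|_{\ell_2\to\ell_2}=O(\varepsilon^{-1})$.

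This factor $\varepsilon^{-1}$ is the hard part: a naive Grönwall estimate on $\|e\|_{\ell_2}$ would produce a stability constant $\exp(CT/\varepsilon)$ and destroy convergence. I would overcome it by an energy argument. Differentiating $\|e\|_{\ell_2}^2$ and using that $B$ is skew-adjoint (oddness of $\nabla\zeta_\varepsilon$, inherited from the symmetry of $\zeta$), the dangerous real quadratic form collapses,
\[
\langle e,D_1^{(m)}B^{(m)}e\rangle=\tfrac12\langle e,[D_1^{(m)},B^{(m)}]e\rangle,
\]
to a commutator that is $O(1)$ in operator norm, because differentiating the smooth multiplier $D_1^{(m)}$ cancels the singular factor carried by $B^{(m)}$. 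Hence $\frac{d}{dt}\|e\|_{\ell_2}^2\le C\|e\|_{\ell_2}^2+C\|e\|_{\ell_2}(\|\delta\|_{\ell_2}+\|\delta'\|_{\ell_2})$ with $C$ independent of $\varepsilon,h$, so Grönwall gives $\|e(t)\|_{\ell_2}\lesssim\int_0^t(\|\delta\|_{\ell_2}+\|\delta'\|_{\ell_2})$ uniformly on $[0,T]$.

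Finally I assemble the rate. Converting the $L_\infty$ consistency bounds to $\ell_2$ over the $O(h^{-n})$ active nodes costs a factor $h^{-n/2}$, so $\|\delta\|_{\ell_2}+\|\delta'\|_{\ell_2}\lesssim h^{-n/2}(\varepsilon^k+h^\ell\varepsilon^{-\ell-1})$, the gradient consistency $\delta'$ carrying the extra $\varepsilon^{-1}$. Reconstructing $v:=[\rho]-[\rho^{\varepsilon h}]=h^n\sum_j e_j\zeta_\varepsilon(\cdot-x_j)$ and applying the Bernstein/Nikolskii-type inverse estimate $\|\partial^\alpha v\|_{L_\infty}\lesssim h^{n/2}\varepsilon^{-|\alpha|-n/2}\|e\|_{\ell_2}$ — the origin of the $\varepsilon^{-n/2}$ in the statement — the powers $h^{\pm n/2}$ cancel, yielding $\|\partial^\alpha([\rho]-[\rho^{\varepsilon h}])\|_{L_\infty}\lesssim\varepsilon^{-|\alpha|-n/2}(\varepsilon^k+h^\ell\varepsilon^{-\ell-1})$. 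Adding the smaller consistency term and invoking the coupling $h\le\varepsilon^{1+(3+n/2)/\ell}$ together with $k\ge 2+\tfrac n2$ to absorb $h^\ell\varepsilon^{-\ell-1}$ into $\varepsilon^k$ gives the claimed $C\varepsilon^{k-|\alpha|-n/2}$. The two steps I expect to be genuinely delicate are the continuation argument confining the discrete trajectory to $\widetilde M$, and the commutator estimate that removes the $\varepsilon^{-1}$ from the stability constant.
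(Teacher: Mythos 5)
Your overall architecture is exactly the paper's: the splitting $\rho-[\rho^{\varepsilon h}]=(\rho-[\rho])+([\rho]-[\rho^{\varepsilon h}])$, the quasi-interpolation bound for the first piece, the Gronwall/energy argument on the discrete $L_2$ norm of the coefficient error, and the Bessel-type inverse estimate of Theorem \ref{theorem_convolution_quadrature_stability} producing the factor $\varepsilon^{-|\alpha|-n/2}$ at the end. Moreover, your commutator identity $\langle e,D_1Be\rangle=\tfrac{1}{2}\langle e,[D_1,B]e\rangle$ is precisely the paper's symmetrization of the sum over the partition $\Lambda\cup\overline{\Lambda}$, which produces the differences $\Delta_{ij}=Df(t,ih,[\rho]_{ih},\cdot)|_{\xi_i}-Df(t,jh,[\rho]_{jh},\cdot)|_{\xi_j}$; the two mechanisms are the same trick in different notation.

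There is, however, a genuine gap at the step you dispose of by saying the commutator is $O(1)$ ``because differentiating the smooth multiplier cancels the singular factor.'' The entries of $[D_1,B]$ are $(D_1^{(i)}-D_1^{(j)})\,h^n\nabla\zeta_\varepsilon((i-j)h)$, and the multiplier $D_1^{(i)}$ is evaluated at the mean-value intermediate point $\xi_i$ lying between $\nabla[\rho]_{ih}$ and $\nabla[\rho^{\varepsilon h}]_{ih}$. This point depends on the unknown discrete solution, so $D_1$ is \emph{not} a smooth function of the grid point, and $|D_1^{(i)}-D_1^{(j)}|$ is not $O(|ih-jh|)=O(\varepsilon)$: it contains the term $|\xi_i-\xi_j|$, which can only be estimated by $C\bigl(\varepsilon+\|\nabla[\rho]-\nabla[\rho^{\varepsilon h}]\|_{\infty,h}\bigr)$, and by the inverse estimate the second summand is only $\lesssim\varepsilon^{-1-n/2}\|e\|_{2,h}$. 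Consequently the commutator norm is $O\bigl(1+\varepsilon^{-2-n/2}\|e\|_{2,h}\bigr)$ and your differential inequality is nonlinear in $\|e\|_{2,h}$; a direct Gronwall argument with $\varepsilon$-independent constant is not yet available. The paper closes this by a quantitative bootstrap: assume a priori $\|e(t)\|_{2,h}\le C_1\varepsilon^{2+n/2}$ (assumption (\ref{btassumption}) in Theorem \ref{theoes}), deduce the linear bound $\|e\|_{2,h}\le C(\varepsilon^k+h^\ell/\varepsilon^{\ell+1})$, and then verify a posteriori that this is indeed $\le C_1\varepsilon^{2+n/2}$. That verification—not the final absorption step you cite—is where the hypotheses $k\ge 2+\tfrac{n}{2}$ and $h\le\varepsilon^{1+(3+n/2)/\ell}$ are actually forced. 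So your qualitative ``continuation argument confining the trajectory to $\widetilde{M}$'' must be upgraded to exactly this self-consistent smallness assumption on $\|e\|_{2,h}$ for the stability estimate, and hence the whole proof, to go through.
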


We will postpone the proof of this theorem. However, since its proof
and the proof of certain auxiliary results will use a discrete $L_p$
norm, which is quite standard in this context, we will introduce this
norm now.

\begin{definition}\label{defDiscreteNorm}
 Let $1\le p\le \infty$ and $h>0$. For a given sequence
  $(\rho_i)_{i\in\Z^n}\in\ell_p$ we define the $h$-dependent $\ell_p$-norm by
\begin{equation}\label{lph}
\|\rho\|_{p,h}:= \begin{cases}
\left(h^n\sum_{i\in\Z^n} |\rho_i|^p\right)^{1/p} & \mbox{ for } 1\le
p<\infty\\
\sup_{i\in\Z^n} |\rho_i| & \mbox{ for } p=\infty.
\end{cases}
\end{equation}
\end{definition}

Obviously,  we are particularly interested in the
situation $\rho_i=\rho(ih)$ in which $\|\cdot\|_{p,h}$ becomes an
approximation to the continuous ${L_p(\R^n)}$ norm.

\section{Auxiliary Results on Quasi-Interpolation}
\label{section_auxiliaryResults}
\setcounter{equation}{0}

The main result of this section is essential for proving our
convergence theorem, Theorem \ref{main_result}. It specifies the
approximation power of our discretisation technique for approximating
functions. It generalises an earlier result of \cite{Raviart-85-1},
particularly by providing also estimates for derivatives.

To proof this theorem, we require two auxiliary results, Theorem 3.1
and Lemma 4.4 from \cite{Raviart-85-1}.

The first result analyses the quadrature error of the composite
rectangular rule and is quite standard. Its proof is based on the
Bramble-Hilbert lemma.

\begin{lemma} \label{lemma_quadrature}
 Let $\ell\in\N$ with $\ell>n\geq 1$. Then there exists a constant $C>0$
 independent of $h$ such that for all functions $\rho\in
 W_1^{\ell}(\R^n)$ we have 
\[
  \left|\int_{\R^n}\rho(y)dy - h^n\sum_{j\in\Z^n}\rho(jh)\right| \leq C h^\ell
  |\rho|_{W_1^\ell(\R^n)}.
\]
\end{lemma}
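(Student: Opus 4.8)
The plan is to avoid a naive cube-by-cube estimate. Decomposing $\R^n$ into the cubes $jh+[0,h)^n$ and applying the Bramble--Hilbert lemma to the local rectangular-rule functional only reproduces constants (not higher-degree polynomials), so it yields order one, losing the global cancellation between adjacent cells. Instead I would extract that cancellation directly through the Poisson summation formula. Writing $\hat\rho(\xi)=\int_{\R^n}\rho(y)e^{-i\xi\cdot y}\,dy$, Poisson summation gives
\[
h^n\sum_{j\in\Z^n}\rho(jh)=\sum_{k\in\Z^n}\hat\rho\!\left(\tfrac{2\pi k}{h}\right),
\]
whose $k=0$ term is exactly $\hat\rho(0)=\int_{\R^n}\rho(y)\,dy$. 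Subtracting, the quadrature error becomes
\[
\int_{\R^n}\rho(y)\,dy-h^n\sum_{j\in\Z^n}\rho(jh)=-\sum_{k\neq 0}\hat\rho\!\left(\tfrac{2\pi k}{h}\right),
\]
so the whole estimate reduces to controlling the Fourier transform at the nonzero dual-lattice points $2\pi k/h$.

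The required decay comes from the Sobolev regularity. For $|\alpha|=\ell$ one has $\widehat{\partial^\alpha\rho}(\xi)=(i\xi)^\alpha\hat\rho(\xi)$ together with $|\widehat{\partial^\alpha\rho}(\xi)|\le\|\partial^\alpha\rho\|_{L_1(\R^n)}$; summing over $|\alpha|=\ell$ and using the homogeneity estimate $|\xi|^\ell\le C_n\sum_{|\alpha|=\ell}|\xi^\alpha|$ yields the global bound $|\xi|^\ell|\hat\rho(\xi)|\le C\,|\rho|_{W_1^\ell(\R^n)}$ for every $\xi$. Hence $|\hat\rho(2\pi k/h)|\le C\,|\rho|_{W_1^\ell}\,(h/(2\pi|k|))^\ell$, and
\[
\left|\int_{\R^n}\rho-h^n\sum_{j}\rho(jh)\right|\le C\,|\rho|_{W_1^\ell(\R^n)}\,h^\ell\sum_{k\neq 0}\frac{1}{(2\pi|k|)^\ell}.
\]
The series $\sum_{k\neq 0}|k|^{-\ell}$ converges precisely because $\ell>n$, which is exactly the hypothesis; this is where the dimensional restriction enters and it fixes a constant $C=C(n,\ell)$ independent of $h$.

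The main obstacle is making Poisson summation legitimate at this low regularity. First, $\ell>n$ guarantees, via the Sobolev embedding $W_1^\ell(\R^n)\hookrightarrow C(\R^n)\cap L_\infty(\R^n)$, that the point values $\rho(jh)$ are well defined, and a local version of that embedding on each cube shows $h^n\sum_j|\rho(jh)|\le C\|\rho\|_{W_1^\ell}$, so the sampling functional is bounded on $W_1^\ell$. Second, the Fourier-side series is absolutely convergent by the decay estimate above, so I would first prove the identity for Schwartz functions (or, matching the paper's compactly supported setting, for $\rho\in C_c^\ell$, where both sides are finite) and then extend to all of $W_1^\ell$ by density, using that both the error functional and the right-hand side $C h^\ell|\rho|_{W_1^\ell}$ are continuous in the $W_1^\ell$ topology. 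I would finally remark that \cite{Raviart-85-1} argues locally via the Bramble--Hilbert lemma; the Fourier route gives the same estimate more transparently and makes clear why the threshold $\ell>n$ is sharp.
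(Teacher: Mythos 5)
Your proof is correct, and it takes a genuinely different route from the one the paper relies on. The paper itself gives no proof: it cites Theorem 3.1 of \cite{Raviart-85-1} and notes that the argument there is based on the Bramble--Hilbert lemma. Any proof along those lines must, as you correctly observe at the outset, harvest the cancellation between cells globally --- a naive cell-by-cell application of Bramble--Hilbert to the rectangular rule reproduces only constants and cannot exceed low order; the standard fix is to periodise $\rho$ (or pass to a quotient space), so that the error functional becomes $\sigma\mapsto\int_{[0,1]^n}\sigma-\sigma(0)$ on periodic functions, where a Poincar\'e-type inequality for mean-zero functions delivers the full order $h^\ell$. Your Poisson-summation proof harvests the same cancellation on the Fourier side: the identity $h^n\sum_j\rho(jh)=\sum_k\hat\rho(2\pi k/h)$ exhibits the error as the tail $\sum_{k\neq 0}\hat\rho(2\pi k/h)$, the decay $|\hat\rho(\xi)|\le C|\xi|^{-\ell}|\rho|_{W_1^\ell(\R^n)}$ comes from $\widehat{\partial^\alpha\rho}(\xi)=(i\xi)^\alpha\hat\rho(\xi)$ together with the equivalence $|\xi|^\ell\le C_n\sum_{|\alpha|=\ell}|\xi^\alpha|$, and the hypothesis $\ell>n$ enters as convergence of $\sum_{k\neq0}|k|^{-\ell}$. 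Your handling of the low-regularity issues is also sound: the sampling functional is bounded on $W_1^\ell(\R^n)$ by the local embedding argument (the constant there may depend on $h$, which is harmless, since for the density step you only need continuity of the error functional at each fixed $h$), and you correctly extend the \emph{inequality}, not the identity, from Schwartz or $C_c^\ell$ functions, where both the functional and the seminorm bound are continuous in $W_1^\ell$. The two proofs are in fact close cousins --- the Fourier coefficients of the periodisation are precisely the lattice values $\hat\rho(2\pi k/h)$, so Poisson summation is implicit in the Bramble--Hilbert route --- but yours is more self-contained, gives an explicit constant $C(n,\ell)$, and shows exactly where $\ell>n$ is used; the Bramble--Hilbert formulation, on the other hand, extends to quadrature functionals without the lattice/convolution structure that Poisson summation requires. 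One small caveat: your closing claim that the threshold $\ell>n$ is thereby shown to be \emph{sharp} is only heuristic --- divergence of your upper bound does not prove the estimate fails for $\ell\le n$ --- but this remark is not needed for the lemma.
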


The second result analyses the convolution error. Its proof is mainly
based upon the Taylor expansion.

\begin{lemma} \label{lemma_convolution}
 Let $\zeta\in\cK^{k,s}$ be a kernel function of order $k\geq 1$
 and $\rho\in  C^{r}(\R^n)\cap W_p^r(\R^n)$ for suitable constants $1\leq p\leq
 \infty$ and $k\leq r\in\N$. Then, there is a constant $C=C(\zeta)>0$
 such that for any $\varepsilon>0$ and $\alpha\in\N_0^n$ with
 $|\alpha|\leq r-k$ the following holds: 
\[
 \|\partial^\alpha \rho-\partial^\alpha \rho^\varepsilon\|_{L_p(\R^n)} =
 \|\partial^\alpha \rho- \rho*\partial^\alpha\zeta_\varepsilon\|_{L_p(\R^n)}
 \leq C\varepsilon^{k} |\rho|_{W_p^{k+|\alpha|}(\R^n)}. 
\]
\end{lemma}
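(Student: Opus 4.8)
The plan is to reduce everything to the case $\alpha=0$ and then exploit the vanishing-moment conditions of the kernel through a Taylor expansion. First I would observe that convolution commutes with differentiation: since $\rho\in C^r$ with $k+|\alpha|\le r$ and $\zeta_\varepsilon\in L_1(\R^n)$, one has $\partial^\alpha\rho^\varepsilon=(\partial^\alpha\rho)*\zeta_\varepsilon=\rho*\partial^\alpha\zeta_\varepsilon$, which already accounts for the equality of the two expressions in the statement. Setting $g:=\partial^\alpha\rho\in C^k(\R^n)\cap W_p^k(\R^n)$, the quantity to estimate becomes $\|g-g*\zeta_\varepsilon\|_{L_p(\R^n)}$, so it suffices to prove $\|g-g*\zeta_\varepsilon\|_{L_p(\R^n)}\le C(\zeta)\,\varepsilon^k|g|_{W_p^k(\R^n)}$; the lemma then follows from $|g|_{W_p^k(\R^n)}=|\partial^\alpha\rho|_{W_p^k(\R^n)}\le|\rho|_{W_p^{k+|\alpha|}(\R^n)}$.

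Next I would rewrite the error pointwise. Substituting $y=\varepsilon z$ gives $(g*\zeta_\varepsilon)(x)=\int_{\R^n}g(x-\varepsilon z)\zeta(z)\,dz$, and since $\int_{\R^n}\zeta=1$ by condition (i),
\[
g(x)-(g*\zeta_\varepsilon)(x)=\int_{\R^n}\bigl(g(x)-g(x-\varepsilon z)\bigr)\zeta(z)\,dz .
\]
I would then Taylor expand $g(x-\varepsilon z)$ about $x$ to order $k$ with integral remainder. The terms of order $1\le|\beta|\le k-1$ are polynomials $z^\beta$ times $\partial^\beta g(x)$, and these integrate to zero against $\zeta$ precisely because of the vanishing-moment condition (ii), while the order-zero term cancels $g(x)$. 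Hence only the order-$k$ remainder survives and one obtains an expression of the form
\[
g(x)-(g*\zeta_\varepsilon)(x)=c_k\,\varepsilon^k\sum_{|\beta|=k}\frac1{\beta!}\int_{\R^n}z^\beta\zeta(z)\int_0^1(1-\tau)^{k-1}\partial^\beta g(x-\tau\varepsilon z)\,d\tau\,dz .
\]

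Finally I would take the $L_p$-norm in $x$ and move it inside the $z$- and $\tau$-integrals by the generalised Minkowski inequality, which is valid for all $1\le p\le\infty$ including the sup-norm case. Translation invariance of the $L_p$-norm gives $\|\partial^\beta g(\cdot-\tau\varepsilon z)\|_{L_p(\R^n)}=\|\partial^\beta g\|_{L_p(\R^n)}$, and $\int_0^1(1-\tau)^{k-1}\,d\tau=1/k$ absorbs the leading constant. The elementary bound $|z^\beta|\le|z|^k$ for $|\beta|=k$ together with condition (iii) makes $\int_{\R^n}|z^\beta|\,|\zeta(z)|\,dz\le\int_{\R^n}|z|^k|\zeta(z)|\,dz$ finite, yielding $\|g-g*\zeta_\varepsilon\|_{L_p(\R^n)}\le C(\zeta)\,\varepsilon^k\sum_{|\beta|=k}\|\partial^\beta g\|_{L_p(\R^n)}$, which is bounded by $C(\zeta)\,\varepsilon^k|g|_{W_p^k(\R^n)}$ after using the equivalence of the order-$k$ seminorm with $\sum_{|\beta|=k}\|\partial^\beta\,\cdot\,\|_{L_p(\R^n)}$.

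I expect the main obstacle to be purely technical: carefully carrying the multi-index Taylor remainder in integral form so that the surviving term is manifestly of order $\varepsilon^k$, and rigorously justifying the interchange of the $L_p$-norm with the $z$- and $\tau$-integrations uniformly in $p$ via the generalised Minkowski inequality, together with the differentiation-under-convolution identity used in the reduction step. The moment cancellation and the integrability of the remainder are exactly what conditions (ii) and (iii) of Definition \ref{definition_kernel} are designed to supply, so no genuinely delicate estimate is required beyond this bookkeeping.
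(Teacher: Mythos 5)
Your proof is correct, and it follows exactly the route the paper indicates for this result: the paper does not prove the lemma itself but cites it as Lemma 4.4 of Raviart (1985), remarking that the proof ``is mainly based upon the Taylor expansion,'' which is precisely your argument (reduction to $\alpha=0$ by commuting derivative and convolution, Taylor expansion with integral remainder, cancellation of orders $1\le|\beta|\le k-1$ by the moment conditions, and the generalised Minkowski inequality with condition (iii) to bound the surviving order-$k$ term). No gaps; your reduction step and the final seminorm comparison $|\partial^\alpha\rho|_{W_p^k(\R^n)}\le C|\rho|_{W_p^{k+|\alpha|}(\R^n)}$ are both justified under the stated hypotheses.
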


The kernel itself satisfies the following norm estimates, which we
will also require later on.

\begin{lemma} \label{lemma_core_norm_estimate}
 Let $1\leq p,q\leq\infty$ with $\frac{1}{p}+\frac{1}{q}=1$, $s\in\N$
 and $\zeta\in W_p^{s}(\R^n)$. 
 For $\varepsilon>0$ let
 $\zeta_\varepsilon(x)=\varepsilon^{-n}\zeta(x/\varepsilon)$,
 $x\in\R^n$. Let  $\alpha\in\N_0^n$.
 \begin{enumerate}
  \item[(a)] If $|\alpha|\leq s$ then 
  \[
\|\partial^\alpha\zeta_\varepsilon\|_{L_p(\R^n)} =
\varepsilon^{-\frac{n}{q}-|\alpha|}\|\partial^\alpha\zeta\|_{L_p(\R^n)}.
\]
  \item[(b)] If $\zeta\in W_2^s(\R^n)\cap
    W_\infty^s(\R^n)$ and if $|\alpha|< s-n$ then 
    for every $p\in [2,\infty]$ there is a 
    constant $C=C(\zeta, \alpha, p)>0$ such that for every
    $\varepsilon>h>0$ we have
\begin{equation}\label{basicestimate}
\|\partial^\alpha\zeta_\varepsilon\|_{p,h} \leq
  C \varepsilon^{-\frac{n}{q}-|\alpha|}.
\end{equation}
\item [(c)] If $\zeta\in \cK_c^{k,s}$ for any $k\in\N$ then
  (\ref{basicestimate}) even holds for all $p\in[1,\infty]$.
 \end{enumerate}
\end{lemma}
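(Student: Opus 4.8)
The plan is to handle the three parts in order, with (a) providing the scaling identity that everything else rests on. For (a) I would simply compute directly: since $\partial^\alpha\zeta_\varepsilon(x) = \varepsilon^{-n-|\alpha|}(\partial^\alpha\zeta)(x/\varepsilon)$, the substitution $y=x/\varepsilon$ in $\int_{\R^n}|\partial^\alpha\zeta_\varepsilon(x)|^p\,dx$ produces the factor $\varepsilon^{-(n+|\alpha|)p+n}$, and taking $p$-th roots yields the exponent $-(n+|\alpha|)+n/p = -n/q-|\alpha|$ via $1/p+1/q=1$; the case $p=\infty$ (so $q=1$) is the pointwise scaling. The hypothesis $|\alpha|\le s$ guarantees $\partial^\alpha\zeta\in L_p$, so the right-hand norm is finite.

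For (b) I would first record the discrete analogue of this scaling. Writing $\delta:=h/\varepsilon\in(0,1)$ and sampling $\partial^\alpha\zeta_\varepsilon$ on $h\Z^n$, an identical factorisation on the grid (using $ih/\varepsilon=i\delta$ and $h^n=\varepsilon^n\delta^n$) gives
\[
\|\partial^\alpha\zeta_\varepsilon\|_{p,h} = \varepsilon^{-n/q-|\alpha|}\,\|\partial^\alpha\zeta\|_{p,\delta},
\]
so (b) reduces to the scale-free claim $\sup_{0<\delta<1}\|\partial^\alpha\zeta\|_{p,\delta}<\infty$. For $p=\infty$ this is immediate from $\|\partial^\alpha\zeta\|_{\infty,\delta}\le\|\partial^\alpha\zeta\|_{L_\infty(\R^n)}$. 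The core case is $p=2$: here $g:=\partial^\alpha\zeta$ satisfies $g^2\in W_1^\ell(\R^n)$ for any integer $\ell$ with $n<\ell\le s-|\alpha|$, since every $\partial^\gamma(g^2)$ with $|\gamma|\le\ell$ is, by the Leibniz rule, a sum of products $\partial^{\alpha+\beta}\zeta\cdot\partial^{\alpha+\gamma-\beta}\zeta$, each in $L_1$ by the $L_2$ Cauchy--Schwarz inequality because $\zeta\in W_2^s$ and $|\alpha|+\ell\le s$. Such an $\ell$ exists precisely because $|\alpha|<s-n$. Applying Lemma \ref{lemma_quadrature} to $g^2$ with discretisation parameter $\delta$ then gives
\[
\delta^n\sum_{j\in\Z^n} g(j\delta)^2 \le \|g\|_{L_2(\R^n)}^2 + C\delta^\ell\,|g^2|_{W_1^\ell(\R^n)} \le C',
\]
uniformly in $\delta<1$, i.e. $\|g\|_{2,\delta}\le C$. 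Finally, for $2<p<\infty$ the elementary discrete interpolation $\|g\|_{p,\delta}\le\|g\|_{2,\delta}^{2/p}\,\|g\|_{\infty,\delta}^{1-2/p}$ (which follows from $\|g\|_{p,\delta}^p\le\|g\|_{\infty,\delta}^{p-2}\|g\|_{2,\delta}^2$) combines the two bounds.

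Part (c) is the only place the restriction $p\ge2$ can be dropped, and the reason is compact support, not extra smoothness. If $\supp\zeta\subseteq B_R$, then for $\delta\in(0,1]$ the number of indices $j$ with $\partial^\alpha\zeta(j\delta)\ne0$ is at most $(2R/\delta+1)^n$, so $\delta^n\cdot\#\{j:\partial^\alpha\zeta(j\delta)\ne0\}\le C(R,n)$ uniformly in $\delta$. Hence for every $p\in[1,\infty]$ one obtains directly $\|\partial^\alpha\zeta\|_{p,\delta}^p\le C(R,n)\,\|\partial^\alpha\zeta\|_{L_\infty(\R^n)}^p$, and the scaling identity from (b) finishes the proof; no summability of an infinite tail, and hence no lower bound on $p$, is required.

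I expect the main obstacle to be the $p=2$ step: one must check carefully that $(\partial^\alpha\zeta)^2$ genuinely lies in $W_1^\ell$ with $\ell>n$ and that the quadrature constant stays bounded as $\delta\to0$. This is exactly where the smoothness budget $|\alpha|<s-n$ is spent, and it also explains why the global estimate (b) is confined to $p\ge2$: the hypothesis $\zeta\in W_2^s\cap W_\infty^s$ supplies only the $L_2$ Cauchy--Schwarz control of derivative products used above, whereas for $p<2$ the infinitely many nonzero grid samples in the non-compact case cannot be summed without additional decay.
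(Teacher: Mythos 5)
Your proof is correct and takes essentially the same route as the paper's: part (b) rests on applying Lemma \ref{lemma_quadrature} to the square of the kernel derivative, with the Leibniz rule and the Cauchy--Schwarz inequality supplying the $W_1^\ell$ control, followed by interpolation between $p=2$ and $p=\infty$, and part (c) rests on counting the lattice points falling in the compact support. The only difference is organisational: you first factor out the scaling via the identity $\|\partial^\alpha\zeta_\varepsilon\|_{p,h}=\varepsilon^{-n/q-|\alpha|}\|\partial^\alpha\zeta\|_{p,h/\varepsilon}$ and then work with the unscaled kernel at grid spacing $\delta=h/\varepsilon<1$, whereas the paper applies the quadrature lemma directly to $\zeta_\varepsilon$ and tracks the powers of $\varepsilon$ through the seminorm bound $|(\partial^\alpha\zeta_\varepsilon)^2|_{W_1^\ell}\leq C\varepsilon^{-n-2|\alpha|-\ell}$ together with $h\leq\varepsilon$ --- a tidier piece of bookkeeping on your part, but not a different argument.
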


\begin{proof} The first statement is obvious for $p=\infty$ and
  follows from the transformation formula for all other $p$.

The second statement also immediately follows for $p=\infty$. For $p=2$,
we note that 
Lemma \ref{lemma_quadrature} yields
\begin{eqnarray}
 \|\partial^\alpha\zeta_\varepsilon\|_{2,h}^2
 &\leq& \left|h^n\sum_{i\in\Z^n} |\partial^\alpha\zeta_\varepsilon(ih)|^2 - \int_{\R^n}
 |\partial^\alpha\zeta_\varepsilon(x)|^2 dx\right| +
 \|\partial^\alpha\zeta_\varepsilon\|_{L_2(\R^n)}^2 \nonumber\\ 
 &\leq& Ch^\ell|g|_{W_1^\ell(\R^n)} +
 \|\partial^\alpha\zeta_\varepsilon\|_{L_2(\R^n)}^2,
\label{bound1}
\end{eqnarray}
as long as  $g=|\partial^\alpha\zeta_\varepsilon|^2\in W_1^{\ell}(\R^n)$ with
 $n<\ell\in\N$ which is guaranteed by our assumptions because of $\ell:=s-|\alpha|>n$ and
\begin{eqnarray*}
 |g|_{W_1^\ell(\R^n)}
 &=& \sum_{|\beta|=\ell} \int_{\R^n}
 |\partial^{\beta}(\partial^\alpha\zeta_\varepsilon(x))^2|dx \\ 
 &\leq& \sum_{|\beta|+|\gamma|=\ell} C\int_{\R^n}
 |\partial^{\alpha+\beta}\zeta_\varepsilon(x)||
\partial^{\alpha+\gamma}\zeta_\varepsilon(x)|dx \\ 
 &\leq&\sum_{|\beta|+|\gamma|=\ell}C\|\partial^{\alpha+\beta}
\zeta_\varepsilon\|_{L_2(\R^n)}\|\partial^{\alpha+\gamma}\zeta_\varepsilon\|_{L_2(\R^n)}\\ 
 &\leq& C\varepsilon^{-n-2|\alpha|-\ell}.
\end{eqnarray*}
This gives, together with $0<h<\varepsilon$ and (\ref{bound1}),
\[
\|\partial^\alpha\zeta_\varepsilon\|_{2,h}^2 \le C( h^\ell
\varepsilon^{-n-2|\alpha|-\ell} + \varepsilon^{n-2|\alpha|}) \le C
\varepsilon^{-n-2|\alpha|}.
\]
Since we also know that $\|\partial^\alpha\zeta_\varepsilon\|_{\infty,h}
\le C \varepsilon^{-n-|\alpha|}$, we can conclude via interpolation that 
\begin{eqnarray*}
\|\partial^\alpha \zeta_\varepsilon\|_{p,h} &\le&
\|\partial^\alpha\zeta_\varepsilon\|_{2,h}^{\frac{2}{p}}\;
\|\partial^\alpha\zeta_\varepsilon\|_{\infty,h}^{1-\frac{2}{p}}\\
& \le& C \varepsilon^{-\left(\frac{n}{2}+|\alpha|\right)\frac{2}{p}} 
\varepsilon^{-(n+|\alpha|)\left( 1-\frac{2}{p}\right)}\\
& = & C \varepsilon^{-\frac{n}{q}-|\alpha|}.
\end{eqnarray*} 
Finally, if $\zeta\in\cK_c^{k,s}$, then we
automatically have $\zeta\in W_1^s(\R^n)\cap
W_\infty^s(\R^n)$. Furthermore, since the number of $i\in\Z^n$ with
$i\frac{h}{\varepsilon}$ in the support of $\zeta$ is bounded by a
constant times $(\varepsilon/h)^n$, we see that 
\[
\|\partial^\alpha\zeta_\varepsilon\|_{1,h} = h^n
\sum_{i\in\Z^n} |\partial^\alpha \zeta(ih/\varepsilon)| \le C\varepsilon^{-|\alpha|}.
\] 
For general $p$ the result then follows by interpolation between
$p=1$ and $p=\infty$ again.
\end{proof}

After these preparations, we can state and prove a result concerning the
approximation power of such quasi-interpolants. Note, that
(\ref{sphApprox_function}) can also be defined for a function
$\rho$, which does not depend on time. In this case $\rho_j$ is just a
constant for each $j\in\Z^n$. For simplicity, we state the next result
under this assumption. However, if $\rho$ depends on $t\in[0,T]$, then the
result obviously holds point-wise for all $t\in [0,T]$.

\begin{theorem} \label{theorem_convolution_quadrature}
  Let $1\leq p\leq \infty$.  Let $\rho\in C^r(\R^n)\cap W_p^r(\R^n)$
  and $\zeta\in \cK^{k,s}$ with $r,k,s\in\N$  be given. Assume that 
$r\ge k$ and $r,s\ge \ell> n$  with $\ell\in\N$. Finally, let
  $\varepsilon>h>0$. 
  Then, there is a constant $C=C(\zeta)>0$ independent of $\varepsilon$
  and $h$ such that for every $\alpha\in\N_0^n$ with $|\alpha|\leq
  \min\{r-k, s-\ell\}$ the error between
  $\rho$ and its approximation   $[\rho]=h^n\sum_i
  \rho(ih)\zeta_\varepsilon(\cdot-ih)$ can be bounded by
  \begin{equation} \label{estimate_convolution_quadrature}
    \|\partial^\alpha\rho -\partial^\alpha[\rho]\|_{L_p(\R^n)}
    \leq C \left( \varepsilon^k \|\rho\|_{W_p^{k+|\alpha|}(\R^n)} +
    \frac{h^\ell}{\varepsilon^{\ell+|\alpha|}}\|\rho\|_{W_p^\ell(\R^n)} \right) .
  \end{equation}
\end{theorem}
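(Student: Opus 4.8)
The plan is to estimate the error by the triangle inequality, splitting it into a pure convolution error and a pure quadrature error,
\[
\|\partial^\alpha\rho - \partial^\alpha[\rho]\|_{L_p(\R^n)} \le \|\partial^\alpha\rho - \partial^\alpha\rho^\varepsilon\|_{L_p(\R^n)} + \|\partial^\alpha\rho^\varepsilon - \partial^\alpha[\rho]\|_{L_p(\R^n)},
\]
where $\rho^\varepsilon = \rho*\zeta_\varepsilon$. The first term is exactly what Lemma \ref{lemma_convolution} controls: since $|\alpha|\le r-k$ it yields $\|\partial^\alpha\rho-\partial^\alpha\rho^\varepsilon\|_{L_p(\R^n)}\le C\varepsilon^k|\rho|_{W_p^{k+|\alpha|}(\R^n)}\le C\varepsilon^k\|\rho\|_{W_p^{k+|\alpha|}(\R^n)}$, which is precisely the first summand of the claimed bound. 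Everything then reduces to controlling the quadrature error $E(x):=\partial^\alpha\rho^\varepsilon(x)-\partial^\alpha[\rho](x)$.

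For the quadrature error I would observe that, because differentiation commutes both with the convolution and with the sum defining $[\rho]$, both pieces arise by applying the composite rectangular rule to the \emph{same} integrand $F_x(y):=\rho(y)\,\partial^\alpha\zeta_\varepsilon(x-y)$. Thus, for each fixed $x$,
\[
E(x) = \int_{\R^n}F_x(y)\,dy - h^n\sum_{j\in\Z^n}F_x(jh),
\]
and Lemma \ref{lemma_quadrature} (applicable because $\ell>n$) gives the pointwise estimate $|E(x)|\le Ch^\ell|F_x|_{W_1^\ell(\R^n)}$. Expanding the $\ell$-th order derivatives of the product $F_x$ by the Leibniz rule and using $\partial_y^{\beta-\gamma}[\partial^\alpha\zeta_\varepsilon(x-y)]=(-1)^{|\beta-\gamma|}(\partial^{\alpha+\beta-\gamma}\zeta_\varepsilon)(x-y)$, each contribution turns into $\int_{\R^n}|\partial^\gamma\rho(y)|\,|(\partial^{\alpha+\beta-\gamma}\zeta_\varepsilon)(x-y)|\,dy$ with $|\beta|=\ell$ and $\gamma\le\beta$. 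Here the hypotheses $\rho\in W_p^r(\R^n)$ with $r\ge\ell$ and $\zeta\in\cK^{k,s}$ with $|\alpha|+\ell\le s$ guarantee that all these derivatives exist and that $F_x\in W_1^\ell(\R^n)$, so that the application of Lemma \ref{lemma_quadrature} is justified.

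The decisive observation, and the step that does the real work, is that each of these integrals is exactly a convolution $(|\partial^\gamma\rho|*|\partial^{\alpha+\beta-\gamma}\zeta_\varepsilon|)(x)$. Hence, after summing the Leibniz terms, I would take the $L_p$-norm in $x$ and apply Young's inequality $\|f*g\|_{L_p}\le\|f\|_{L_p}\|g\|_{L_1}$, which cleanly decouples $\rho$ from the kernel:
\[
\|E\|_{L_p(\R^n)} \le C h^\ell\sum_{|\beta|=\ell}\sum_{\gamma\le\beta}\binom{\beta}{\gamma}\|\partial^\gamma\rho\|_{L_p(\R^n)}\,\|\partial^{\alpha+\beta-\gamma}\zeta_\varepsilon\|_{L_1(\R^n)}.
\]
Lemma \ref{lemma_core_norm_estimate}(a) with $p=1$ then supplies $\|\partial^{\alpha+\beta-\gamma}\zeta_\varepsilon\|_{L_1(\R^n)}=\varepsilon^{-|\alpha|-(\ell-|\gamma|)}\|\partial^{\alpha+\beta-\gamma}\zeta\|_{L_1(\R^n)}$, which is finite since $|\alpha+\beta-\gamma|\le s$. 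Using $\|\partial^\gamma\rho\|_{L_p(\R^n)}\le\|\rho\|_{W_p^\ell(\R^n)}$ for $|\gamma|\le\ell$ and bounding $\varepsilon^{|\gamma|-\ell}\le\varepsilon^{-\ell}$ (the worst term being $\gamma=0$), all surviving powers collapse to $\varepsilon^{-\ell-|\alpha|}$, giving $\|E\|_{L_p(\R^n)}\le C\,h^\ell\varepsilon^{-\ell-|\alpha|}\|\rho\|_{W_p^\ell(\R^n)}$, the second summand of the claim. The main obstacle is precisely this conversion of the \emph{pointwise}, Bramble--Hilbert type quadrature estimate of Lemma \ref{lemma_quadrature} into an $L_p$-estimate carrying the correct $\varepsilon$-scaling; the convolution structure of the Leibniz terms, together with Young's inequality and the sharp kernel scaling of Lemma \ref{lemma_core_norm_estimate}, is what makes the decoupling succeed. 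I would also keep explicit track of the smallness $\varepsilon\le 1$ used in the final collapse of the $\varepsilon$-powers.
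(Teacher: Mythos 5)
Your proposal is correct and follows essentially the same route as the paper's own proof: the same splitting into convolution and quadrature errors, Lemma \ref{lemma_convolution} for the first term, and for the second the pointwise application of Lemma \ref{lemma_quadrature} to $y\mapsto\rho(y)\partial^\alpha\zeta_\varepsilon(x-y)$, followed by the Leibniz expansion, recognition of the convolution structure, Young's inequality, and the kernel scaling of Lemma \ref{lemma_core_norm_estimate}. Your explicit tracking of the per-term $\varepsilon$-powers (and the remark that $\varepsilon\le 1$ is implicitly used when collapsing them) is a slightly more careful bookkeeping of the step the paper compresses into the single bound $\|\zeta_\varepsilon\|_{W_1^{\ell+|\alpha|}(\R^n)}\le C\varepsilon^{-\ell-|\alpha|}$, but the argument is the same.
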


\begin{proof}
We can split the error on the left-hand side of
\eqref{estimate_convolution_quadrature} into a convolution and a
quadrature error:
\[
 \|\partial^\alpha\rho -\partial^\alpha[\rho]\|_{L_p(\R^n)} \leq
 \|\partial^\alpha\rho-\partial^\alpha\rho^\varepsilon\|_{L_p(\R^n)} + 
 \|\partial^\alpha\rho^\varepsilon-\partial^\alpha[\rho]\|_{L_p(\R^n)}.
\]
Using Lemma \ref{lemma_convolution}, we see that the first term on the
right-hand side can be bounded by 
\[
\|\partial^\alpha\rho-\partial^\alpha\rho^\varepsilon\|_{L_p(\R^n)}\le 
C\varepsilon^k|\rho|_{W_p^{k+|\alpha|}(\R^n)},
\]
 provided that  $r\geq
k+|\alpha|$. For the second term, we first note that we have for
$x\in\R^n$ fixed that
\[
 |\partial^\alpha\rho^\varepsilon(x)-\partial^\alpha[\rho](x)|
 =\left|\int_{\R^n}\rho(y)\partial^\alpha\zeta_\varepsilon(x-y)dy - \sum_{j\in\Z^n}
 h^n\rho(jh)\partial^\alpha\zeta_\varepsilon(x-jh) \right|. 
\]
Hence, if  $r\geq \ell$ and $s-|\alpha|\geq \ell$ for $\ell>n$ then we can use Lemma
\ref{lemma_quadrature} to derive
\begin{eqnarray*}
 |\partial^\alpha\rho^\varepsilon  (x) -\partial^\alpha[\rho](x)| 
 &\leq &Ch^\ell\left|
 \rho \,\partial^\alpha\zeta_\varepsilon(x-\cdot)\right|_{W_1^\ell(\R^n)} \\ 
 &\leq &Ch^\ell
\sum_{|\beta|=\ell}\int_{\R^n}|\partial_y^\beta(\rho(y)
\partial^\alpha\zeta_\varepsilon(x-y))|dy \\ 
 &\leq& Ch^\ell\sum_{|\beta|,|\gamma|\leq
  \ell}\int_{\R^n}|\partial^\beta\rho(y)|\,|
\partial^{\alpha+\gamma}\zeta_\varepsilon(x-y)|dy \\ 
 &=& Ch^\ell\sum_{|\beta|,|\gamma|\leq \ell}|\partial^\beta\rho|*| 
\partial^{\alpha+\gamma}\zeta_\varepsilon|(x).
\end{eqnarray*}
Young's inequality finally yields
\[
 \|\partial^\alpha\rho^\varepsilon -\partial^\alpha[\rho]\|_{L_p(\R^n)}
 \leq Ch^\ell\|\rho\|_{W_p^\ell(\R^n)}\|\zeta_\varepsilon\|_{W_1^{\ell+|\alpha|}(\R^n)} 
 \leq C(\zeta)\frac{h^\ell}{\varepsilon^{\ell+|\alpha|}}\|\rho\|_{W_p^\ell(\R^n)},
\]
where we have also used Lemma \ref{lemma_core_norm_estimate} in the
last step.
\end{proof}

\begin{remark} We will use this result in particular to bound
  estimates on  first order derivatives. Hence, the smoothness $r$ and
  $s$ of $\rho$ and $\zeta$, respectively, have to satisfy $r\ge
  \max\{\ell,k+1\}$ and $s\ge \ell+1$ with $\ell>n$.
\end{remark}

Finally, we want to state and prove a result, which is interesting on
its own. It shows that the kernel and its derivatives provide a Bessel
sequence. We use it in our proof to establish bounds on the $L_\infty$
errors of our approximation based on error bounds for the coefficients
in the discrete $L_2$ norm. It seems worthwhile to mention that for
this purpose property $ii)$ of the kernel definition, i.e. the
property that defines the order of the kernel, is not required. 

\begin{theorem} \label{theorem_convolution_quadrature_stability}
 Let $1\leq p,q\leq\infty$ with $\frac{1}{p}+\frac{1}{q}=1$. Let
 $(a_i)_{i\in\Z^n}\in \ell_q(\R)$ and let $\zeta$ be a kernel as in
 Lemma \ref{lemma_core_norm_estimate}, i.e.  either $\zeta\in
 W_2^s(\R^n)\cap  W_\infty^s(\R^n)$  for $p\in[2,\infty]$ or $\zeta\in
 \cK_c^{k,s}$ for $p\in[1,\infty]$.
 If $\alpha\in\N_0^n$ satisfies $|\alpha|<s-n$  then there is a
 constant $C>0$ such that for 
 $\varepsilon>h$ we have
 \[
\|\partial^\alpha [a]\|_{\infty,h} = 
\sup_{i\in\Z^n} \left|h^n\sum_{j\in\Z^n}a_j\partial^\alpha
 \zeta_\varepsilon(ih-jh)\right|\leq
 C\|a\|_{q,h}\varepsilon^{-\frac{n}{q}-|\alpha|}.
\] 
Moreover, if $\zeta\in\cK_c^{k,s}$ and $|\alpha|<s-n$ then for $p\in[1,\infty]$ there is a
constant $C>0$ such that 
\begin{equation}\label{finalestimate}
\|\partial^\alpha[a]\|_{L_\infty(\R^n)} \le C
\|a\|_{q,h}\varepsilon^{-\frac{n}{q}-|\alpha|}. 
\end{equation}
\end{theorem}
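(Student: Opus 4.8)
The plan is to derive both bounds from the discrete Hölder inequality together with the kernel norm estimates already supplied by Lemma \ref{lemma_core_norm_estimate}. For the $h$-weighted sequence norms, Hölder takes the form
\[
\left| h^n \sum_{j\in\Z^n} a_j c_j \right| \le \|a\|_{q,h}\,\|c\|_{p,h}, \qquad \tfrac1p+\tfrac1q=1,
\]
which follows by applying the ordinary Hölder inequality to the sequences $(h^{n/q}a_j)_j$ and $(h^{n/p}c_j)_j$ and using $h^{n/q}h^{n/p}=h^n$. Since $(a_i)\in\ell_q$ forces $\|a\|_{q,h}<\infty$, this also guarantees absolute convergence of all series below.

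For the first estimate I would fix $i\in\Z^n$ and apply this inequality to $(a_j)_j$ and the sampled kernel $c_j:=\partial^\alpha\zeta_\varepsilon(ih-jh)$, obtaining
\[
\left| h^n \sum_{j\in\Z^n} a_j \,\partial^\alpha\zeta_\varepsilon(ih-jh) \right| \le \|a\|_{q,h}\left( h^n\sum_{j\in\Z^n}|\partial^\alpha\zeta_\varepsilon(ih-jh)|^p\right)^{1/p}.
\]
The decisive observation is that the substitution $m=i-j$ turns the second factor into $\|\partial^\alpha\zeta_\varepsilon\|_{p,h}$, which no longer depends on $i$. Because $|\alpha|<s-n$, Lemma \ref{lemma_core_norm_estimate}(b) (for $\zeta\in W_2^s(\R^n)\cap W_\infty^s(\R^n)$ and $p\in[2,\infty]$) respectively (c) (for $\zeta\in\cK_c^{k,s}$ and $p\in[1,\infty]$) bounds this factor by $C\varepsilon^{-n/q-|\alpha|}$. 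Taking the supremum over $i$ yields the claimed bound for $\|\partial^\alpha[a]\|_{\infty,h}$.

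For the second estimate the same Hölder step applies verbatim at an \emph{arbitrary} point $x\in\R^n$, giving $|\partial^\alpha[a](x)| \le \|a\|_{q,h}\bigl( h^n\sum_{j}|\partial^\alpha\zeta_\varepsilon(x-jh)|^p\bigr)^{1/p}$. Here the sampling points $x-jh$ form the shifted lattice $x-h\Z^n$, so the reindexing trick from the first part is unavailable, and one must instead bound this shifted sampled norm \emph{uniformly} in the continuous shift $x$. I expect this uniformity to be the only delicate point of the whole argument, and it is exactly where the compact support of $\zeta\in\cK_c^{k,s}$ enters: repeating the counting argument from the proof of Lemma \ref{lemma_core_norm_estimate}(c), at most $C(\varepsilon/h)^n$ of the points $x-jh$ lie in $\supp\partial^\alpha\zeta_\varepsilon$ (a ball of radius proportional to $\varepsilon$), while each nonzero term is at most $\varepsilon^{-n-|\alpha|}\|\partial^\alpha\zeta\|_{L_\infty(\R^n)}$. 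Since both the count and the pointwise bound are independent of $x$, for $p<\infty$ one gets
\[
h^n\sum_{j\in\Z^n}|\partial^\alpha\zeta_\varepsilon(x-jh)|^p \le C\,h^n(\varepsilon/h)^n\,\varepsilon^{-(n+|\alpha|)p} = C\,\varepsilon^{\,n-(n+|\alpha|)p},
\]
whose $p$-th root equals $C\varepsilon^{-n/q-|\alpha|}$ because $1-\tfrac1p=\tfrac1q$ (the case $p=\infty$ being the trivial $L_\infty$ bound). Taking the supremum over $x\in\R^n$ then yields \eqref{finalestimate}.
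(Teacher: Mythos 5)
Your proposal is correct and follows essentially the same route as the paper's own proof: discrete H\"older's inequality combined with the lattice reindexing and Lemma \ref{lemma_core_norm_estimate}(b)/(c) for the first bound, and H\"older at an arbitrary $x\in\R^n$ plus the compact-support counting argument (at most $C(\varepsilon/h)^n$ nonzero terms, each bounded by $C\varepsilon^{-n-|\alpha|}$) for \eqref{finalestimate}. You correctly identified the uniformity in the continuous shift $x$ as the one point where compact support is genuinely needed, which is exactly how the paper handles it.
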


\begin{proof}  For $\zeta$ and $p$ as specified in the theorem the first statement follows easily by using H\"older's inequality and Lemma \ref{lemma_core_norm_estimate}, since for each $i\in\Z^n$ we have
\begin{eqnarray*}
\left|h^n\sum_{j\in\Z^n}  
 a_j\partial^\alpha\zeta_\varepsilon(ih-jh)\right|
  &\leq& h^n\sum_{j\in\Z^n}|a_j||\partial^\alpha\zeta_\varepsilon(ih-jh)| \\
  &\leq& \|a\|_{q,h}\|\partial^\alpha\zeta_\varepsilon(ih-.)\|_{p,h}\\
  &\leq& \|a\|_{q,h}\|\partial^\alpha\zeta_\varepsilon\|_{p,h}
\\
& \le &   C\|a\|_{q,h}\varepsilon^{-\frac{n}{q}-|\alpha|}.
\end{eqnarray*}

The second statement follows similarly. For $1\leq p < \infty$ it holds that
\begin{eqnarray*}
\|\partial^\alpha[a]\|_{L_\infty(\R^n)} & = & \sup_{x\in\R^n} \left|h^n\sum_{j\in\Z^n}  
 a_j\partial^\alpha\zeta_\varepsilon(x-jh)\right|\\
& \le & \|a\|_{q,h} 
 \sup_{x\in\R^n}\left(h^n \sum_{j\in\Z^n} |\partial^\alpha \zeta_\varepsilon (x-jh)|^p\right)^{1/p}.
\end{eqnarray*}
Using the compact support of $\zeta_\varepsilon$ shows once again that
the sum is only a sum over at most $C(\varepsilon/h)^n$ terms so that
we can continue with the estimate
\[
\|\partial^\alpha[a]\|_{L_\infty(\R^n)}\le C \|a\|_{q,h} \left(h^n
(\varepsilon/h)^n \varepsilon^{-(n+|\alpha|)p}\right)^{1/p} = C
\|a\|_{q,h}\varepsilon^{-\frac{n}{q}-|\alpha|}.
\]
The remaining case $p=\infty$ is trivial.
\end{proof}

\section{Proof of Convergence}
\label{section_proof}
\setcounter{equation}{0}

In this section, we will prove Theorem \ref{main_result}. As usual, to
simplify the notation, we will suppress the time variable whenever
possible. 

Since our spatial discretisation technique immediately leads to an
infinite system of ordinary differential equations with solutions
 $\{\rho_i^{\varepsilon h}\}$, it is natural to use the discrete
norms defined in Definition \ref{defDiscreteNorm} and to
bound the error 
\[
e_i = \rho_{ih} - \rho_i^{\varepsilon h}, \qquad i\in\Z^n,
\]
using these norms. In this context, it is also quite natural to split the
error into a consistency and a stability error. We have 
\begin{eqnarray}
 \frac{1}{2}\frac{d}{dt}\|e\|_{2,h}^2 &=&
 \frac{1}{2}\frac{d}{dt}h^n\sum_{i\in\Z^n} e_i^2 =  h^n\sum_{i\in\Z^n}
 e_i\dot e_i \nonumber\\ 
 &=& -h^n\sum_{i\in\Z^n} e_i \left(f(t,ih,\rho_{ih},\nabla\rho_{ih}) -
 f(t,ih,[\rho^{\varepsilon h}]_{ih},\nabla[\rho^{\varepsilon
     h}]_{ih})\right) \nonumber\\  
 &=& -h^n\sum_{i\in\Z^n} e_i \left(f(t,ih,\rho_{ih},\nabla\rho_{ih}) -
 f(t,ih,[\rho]_{ih},\nabla[\rho]_{ih})\right) \nonumber\\ 
    &&\mbox{}-h^n\sum_{i\in\Z^n} e_i \left(f(t,ih,[\rho]_{ih},\nabla[\rho]_{ih}) -
 f(t,ih,[\rho^{\varepsilon h}]_{ih},\nabla[\rho^{\varepsilon
     h}]_{ih})\right)\nonumber\\
&=:& -(e_c + e_s).
\end{eqnarray}

The first term in the last expression,
\begin{equation}\label{ec}
e_c=e_c(t):=h^n\sum_{i\in\Z^n} e_i \left(f(t,ih,\rho_{ih},\nabla\rho_{ih}) -
 f(t,ih,[\rho]_{ih},\nabla[\rho]_{ih})\right),
\end{equation}
represents the consistency error of the method while the second term
\begin{equation}\label{es}
e_s=e_s(t):=h^n\sum_{i\in\Z^n} e_i \left(f(t,ih,[\rho]_{ih},\nabla[\rho]_{ih})-f(t,ih,[\rho^{\varepsilon h}]_{ih},\nabla[\rho^{\varepsilon
     h}]_{ih})\right)
\end{equation}
 represents the stability error. We will now bound both errors
 separately, starting with the consistency error.

For the convenience of the reader, we recall our initial
assumptions. We assume that the support of the solution $\rho(t,\cdot)$
is contained in a compact, convex set $\Omega\subseteq\R^n$ for all
$t\in[0,T]$. We have defined the set $M$ and $\widetilde{M}$ in
(\ref{M}) and (\ref{Mtilde}), respectively, to be 
\begin{eqnarray*}
M&:=&\{(t,x,\rho(t,x),\nabla \rho(t,x))\in\R^{2n+2} : t\in[0,T],
x\in\Omega\}\subseteq\R^{2n+2},\\
\widetilde{M}&:=& [0,T]\times\Omega_1\times \Pi. 
\end{eqnarray*}
Here $\Omega_1$ is the convex hull of $\cup_{x\in\Omega} B_1(x)$,
where $B_1(x)$ is the ball of radius $1$ and centre $x$. Moreover,
$\Pi$ is a convex, compact super set of $\{(\rho(t,x),\nabla
\rho(t,x)) : t\in[0,T], x\in\Omega\}\subseteq \R^{n+1}$.

We have also assumed that the defining function $f$ is sufficiently smooth and
has compact support in the compact and convex set
$\widetilde{M}\supseteq M$. As usual, we will consider $f$ to be
defined on all of $\R^n$ with zero value outside $\widetilde{M}$.

The reason for this technical definition is the following one. Suppose
that our kernel $\zeta$ has support in the unit ball and that $\rho(t,\cdot)$
has support in $\Omega$, then, obviously
\[
[\rho] = \sum_{j\in\Z^d} \rho_{jh}(t) \zeta_\varepsilon(\cdot-jh)
\]
has support in $\Omega_1$ for all $0< \varepsilon\le 1$. Moreover, the
convexity of 
$\widetilde{M}$ guarantees that all connecting line segments between
two points in $\widetilde{M}$ are also contained in $\widetilde{M}$.

\begin{proposition}\label{propec}
Let $T>0$. Let $\zeta\in\cK^{k,s}_c$ 
with $k\in\N$ and $s> \ell>n$ for an $\ell\in\N$ and with support in the unit
ball. Let  $\rho$ be the solution of
(\ref{problem_equation}),(\ref{problem_initialValue}). Assume that
$\rho(t,\cdot)\in C^r_c(\R^n)$ with $r\ge \max\{k+1,\ell\}$ and that the
support of $\rho(t,\cdot)$ is 
contained in the compact set $\Omega\subseteq\R^{n}$ for all $t\in
[0,T]$. Suppose further that 
$f\in C^1_c(\widetilde{M})$, with $\widetilde{M}$ defined in
(\ref{Mtilde}). Then,  there is a constant $C>0$ depending  on $f$,
$\zeta$, $\rho$ and $M$ such that the consistency error $e_c$ from 
(\ref{ec}) can be bounded by 
\[
|e_c(t)|\le 
C\left(\varepsilon^k + \frac{h^\ell}{\varepsilon^{\ell+1}}\right)^2
+ \frac{1}{2}\|e(t)\|_{2,h}^2, \qquad t\in[0,T],
\]
for all $0<h\le \varepsilon^{1+\frac{2}{\ell}}$ sufficiently small.
\end{proposition}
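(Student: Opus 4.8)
The plan is to dominate the consistency error by the product of the discrete error norm $\|e\|_{2,h}$ and the quasi-interpolation error of $\rho$, and then to split this product via Young's inequality so that one half reproduces the $\tfrac12\|e\|_{2,h}^2$ term and the other half produces the $\varepsilon$--$h$ factor. The first ingredient is the smoothness of $f$. Since $f\in C^1_c(\widetilde M)$ and $\widetilde M$ is convex, for each fixed $i$ the point $(t,ih,\rho_{ih},\nabla\rho_{ih})$ lies in $M\subseteq\widetilde M$ by \eqref{M}, and the segment joining it to $(t,ih,[\rho]_{ih},\nabla[\rho]_{ih})$ stays in $\widetilde M$, so the mean value theorem gives
\[
\left|f(t,ih,\rho_{ih},\nabla\rho_{ih})-f(t,ih,[\rho]_{ih},\nabla[\rho]_{ih})\right|
\le L\left(|\rho_{ih}-[\rho]_{ih}|+|\nabla\rho_{ih}-\nabla[\rho]_{ih}|\right),
\]
with $L$ controlled by the $C^1$-norm of $f$. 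Terms with $ih\notin\Omega_1$ drop out, since there both $\rho_{ih}$ and $[\rho]_{ih}$ vanish. The hypothesis $h\le\varepsilon^{1+2/\ell}$ (with "sufficiently small") enters here only to guarantee that the perturbed arguments stay in a fixed compact neighbourhood of $M$, so that a single Lipschitz constant $L$ is valid uniformly in $i$ and $t$.

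Inserting this bound into the definition \eqref{ec} of $e_c$ and applying the discrete Cauchy--Schwarz inequality with weight $h^n$, I obtain
\[
|e_c(t)|\le L\,\|e\|_{2,h}\,\big\||\rho-[\rho]|+|\nabla\rho-\nabla[\rho]|\big\|_{2,h}.
\]
The second factor is a discrete $\ell_{2,h}$-norm of a sequence supported on grid points in the compact set $\Omega_1$, where both $\rho$ and $[\rho]$ are nonzero. Because the number of indices $i$ with $ih\in\Omega_1$ is at most $C h^{-n}$, the weight $h^n$ cancels and this discrete norm is bounded by $C$ times the corresponding continuous $L_\infty$-norm. Thus it suffices to estimate $\|\rho-[\rho]\|_{L_\infty}$ and $\|\nabla\rho-\nabla[\rho]\|_{L_\infty}$.

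These are exactly the quantities supplied by Theorem \ref{theorem_convolution_quadrature} with $p=\infty$ and $|\alpha|\le1$; the assumptions $r\ge\max\{k+1,\ell\}$ and $s>\ell>n$ are precisely what the Remark following that theorem demands for first-order derivatives. The estimate yields
\[
\|\rho-[\rho]\|_{L_\infty}+\|\nabla\rho-\nabla[\rho]\|_{L_\infty}
\le C\Big(\varepsilon^k+\tfrac{h^\ell}{\varepsilon^{\ell+1}}\Big),
\]
where the dominant quadrature term $h^\ell/\varepsilon^{\ell+1}$ comes from the first-derivative case (and absorbs the weaker $h^\ell/\varepsilon^\ell$ for $\varepsilon<1$). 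Combining the three displays gives $|e_c(t)|\le C\,\|e\|_{2,h}\big(\varepsilon^k+h^\ell/\varepsilon^{\ell+1}\big)$, and a final application of Young's inequality $ab\le\tfrac12 a^2+\tfrac12 b^2$ with $b=\|e\|_{2,h}$ separates this into $\tfrac12\|e(t)\|_{2,h}^2$ plus $C\big(\varepsilon^k+h^\ell/\varepsilon^{\ell+1}\big)^2$, which is the asserted bound.

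The step requiring the most care is the passage from the continuous approximation estimates of Theorem \ref{theorem_convolution_quadrature} to the discrete $\ell_{2,h}$-norm produced by Cauchy--Schwarz; I expect this to be the main obstacle, and it is resolved by the compact-support counting argument that trades the discrete norm for the continuous $L_\infty$-norm. A secondary technical point is verifying that the perturbed arguments of $f$ remain inside the region where the uniform Lipschitz bound holds, which is exactly where the coupling $h\le\varepsilon^{1+2/\ell}$ is used.
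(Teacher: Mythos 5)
Your proposal is correct and follows essentially the same route as the paper's own proof: mean value theorem on $f$ over segments staying in the convex set $\widetilde{M}$ (using $h\le\varepsilon^{1+2/\ell}$ to keep the perturbed arguments there), discrete Cauchy--Schwarz against $\|e\|_{2,h}$, the counting argument over the $O(h^{-n})$ grid points in the support to trade the discrete norm for the uniform bounds of Theorem \ref{theorem_convolution_quadrature}, and Young's inequality to split the product. The only cosmetic difference is that the paper indexes the surviving terms by $I_{\varepsilon h}$ (grid points in an $\varepsilon$-neighbourhood of $\Omega$) rather than by $\Omega_1$, which changes nothing in the estimate.
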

\begin{proof} 
Our assumption on $\rho$ immediately shows  $\rho\in
W_\infty^r(\R^n)$. Thus, Theorem \ref{theorem_convolution_quadrature}
gives in particular for each $i\in\Z^n$
\begin{equation}\label{b1}
|\rho_{ih} - [\rho]_{ih}| \le C \left(\varepsilon^k +
\frac{h^\ell}{\varepsilon^{\ell}}\right) 
\end{equation}
and, since $r\ge \max\{k+1,\ell\}$ and $s\ge \ell+1$,  
\begin{equation}\label{b2}
|\nabla \rho_{ih} - \nabla [\rho]_{ih}| \le C\left(\varepsilon^k +
\frac{h^\ell}{\varepsilon^{\ell+1}}\right),
\end{equation}
where the constant $C>0$ depends on $\rho$ and $\zeta$.
Next, let 
\[
I_{\varepsilon h}:=\{i\in\Z^n : ih\in\Omega_\varepsilon\}
\]
and note that the cardinality of $I_{\varepsilon h}$ can be bounded by a constant
times $h^{-n}$  since $\Omega$ is compact. The explanation given above
shows that for $i\not\in I_{\varepsilon h}$ we have $\rho_{ih}=[\rho]_{ih}=0$.

Using the Cauchy-Schwarz inequality and the mean value theorem, there
are $\eta_i\in\R$ on the line segment between $\rho_{ih}$ and
$[\rho]_{ih}$ and $\xi_i\in\R^n$ on the line segment between
$\nabla\rho_{ih}$ and $\nabla[\rho]_{ih}$ such that 
\begin{eqnarray*}
|e_c| & = &  \left|h^n\sum_{i\in\Z^n} e_i
\left(f(t,ih,\rho_{ih},\nabla\rho_{ih}) -
f(t,ih,[\rho]_{ih},\nabla[\rho]_{ih})\right)\right| \\ 
& = & \left|h^n \sum_{i\in I_{\varepsilon h}} e_i
Df(t,ih,\cdot,\cdot)|_{(\eta_i,\xi_i)}\cdot 
(\rho_{ih}-[\rho]_{ih}, \nabla\rho_{ih}-\nabla[\rho]_{ih})\right|\\
 &\leq& |f|_{W_\infty^1(\widetilde{M})}
\|e\|_{2,h}\left(h^n\sum_{i\in I_{\varepsilon h}}  
  |\rho_{ih}-[\rho]_{ih}|^2 + |\nabla\rho_{ih}
  -\nabla[\rho]_{ih}|^2\right)^{\frac{1}{2}} \\  
 &\leq& C(f, \zeta,\rho)\left(\varepsilon^k +
\frac{h^\ell}{\varepsilon^{\ell+1}}\right)\|e\|_{2,h} \\ 
 &\leq& C(f, \zeta,\rho)\left(\varepsilon^k +
\frac{h^\ell}{\varepsilon^{\ell+1}}\right)^2 +
\frac{1}{2}\|e\|_{2,h}^2,
\end{eqnarray*}
where we have also used (\ref{b1}) and (\ref{b2}) as well as the fact that $(\eta_i,\xi_i)\in M_1$ for $\varepsilon>0$ small.
\end{proof}


The next step in our error analysis is to bound the stability error
(\ref{es}). Here, the proof is more demanding and requires a bootstrap
argument, i.e.~we need to make an assumption on the total error to
prove the following bound, which we have to verify later on.

\begin{theorem}\label{theoes}
Let $T>0$. Let $\zeta\in\cK^{k,s}_c$ 
with $s> \ell>n$ for an $\ell\in\N$ being even and with support in the unit
ball. Let  $\rho$ be the solution of
(\ref{problem_equation}),(\ref{problem_initialValue}). Assume that
$\rho(t,\cdot)\in C^r_c(\R^n)$ with $r>k\ge 1$ and that the support of
$\rho(t,\cdot)$ is 
contained in the compact set $\Omega\subseteq\R^{n}$ for all $t\in
[0,T]$. Suppose further that 
$f\in C^2(\widetilde{M})$, where $\widetilde{M}$ is defined in
(\ref{Mtilde}).  Assume that $h\le \varepsilon^{1+2/\ell}$. Assume
finally, that the error satisfies  
\begin{equation}\label{btassumption}
 \|e(t)\|_{2,h} \leq C_1\varepsilon^{2+\frac{n}{2}}, \qquad t\in[0,T],
\end{equation}
with a constant $C_1>0$ independent of $h$ and $\varepsilon$.
Then, there is a constant $C>0$ independent of $\varepsilon>0$ and
$h>0$ such that the  stability error $e_s$ from (\ref{es}) can be bounded by  
\[
|e_s(t)|\le C \|e(t)\|_{2,h}^2, \qquad t\in [0,T],
\]
provided $\varepsilon>0$ is sufficiently small.
\end{theorem}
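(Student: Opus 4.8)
The plan is to expand the difference of the two $f$-values inside $e_s$ to first order and to exploit that the two arguments of $f$ differ only through the error coefficients. First I would record the identities
\[
[\rho]_{ih}-[\rho^{\varepsilon h}]_{ih}=h^n\sum_{j\in\Z^n}e_j\zeta_\varepsilon(ih-jh)=[e]_{ih},\qquad \nabla[\rho]_{ih}-\nabla[\rho^{\varepsilon h}]_{ih}=\nabla[e]_{ih},
\]
so that, writing $\partial_\rho f_i$ and $w_i:=\nabla_{\nabla\rho}f$ for the partial derivatives of $f$ evaluated at $(t,ih,[\rho]_{ih},\nabla[\rho]_{ih})$, a Taylor expansion with second order remainder gives
\[
e_s=\underbrace{h^n\sum_i e_i\,\partial_\rho f_i\,[e]_{ih}}_{S_1}+\underbrace{h^n\sum_i e_i\,w_i\cdot\nabla[e]_{ih}}_{S_2}+\underbrace{h^n\sum_i e_iR_i}_{S_3},
\]
where $|R_i|\le C(|[e]_{ih}|^2+|\nabla[e]_{ih}|^2)$ because $f\in C^2(\widetilde{M})$ has globally bounded second derivatives. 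I would then bound $S_1,S_2,S_3$ separately.

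The term $S_1$ is routine. Since $\|\partial_\rho f\|_\infty$ is bounded, Cauchy--Schwarz in $\|\cdot\|_{2,h}$ gives $|S_1|\le C\|e\|_{2,h}\|[e]\|_{2,h}$. Writing $[e]$ as the discrete convolution of $e$ against $h^n\zeta_\varepsilon(\cdot)$ and applying the discrete Young inequality together with $\|\zeta_\varepsilon\|_{1,h}\le C$ (Lemma \ref{lemma_core_norm_estimate}(c) with $|\alpha|=0$) yields $\|[e]\|_{2,h}\le C\|e\|_{2,h}$, hence $|S_1|\le C\|e\|_{2,h}^2$.

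The term $S_2$ is the main obstacle, because the naive bound $|S_2|\le\|w\|_\infty\|e\|_{2,h}\|\nabla[e]\|_{2,h}$ loses a factor $\varepsilon^{-1}$: the same discrete Young argument only gives $\|\nabla[e]\|_{2,h}\le\|\nabla\zeta_\varepsilon\|_{1,h}\|e\|_{2,h}\le C\varepsilon^{-1}\|e\|_{2,h}$. To recover the missing power of $\varepsilon$ I would symmetrise. Writing $S_2=h^{2n}\sum_{i,j}e_ie_j\,w_i\cdot\nabla\zeta_\varepsilon(ih-jh)$ and using that $\zeta$ is even, so that $\nabla\zeta_\varepsilon$ is odd, the $i\leftrightarrow j$ swap gives
\[
2S_2=h^{2n}\sum_{i,j}e_ie_j\,(w_i-w_j)\cdot\nabla\zeta_\varepsilon(ih-jh).
\]
Now $\nabla\zeta_\varepsilon(ih-jh)$ is supported on $|ih-jh|\le\varepsilon$, and on this range $|w_i-w_j|\le C\varepsilon$: this follows from $f\in C^2$ (so $w$ is Lipschitz in all arguments) together with $|ih-jh|\le\varepsilon$, the uniform spatial Lipschitz bound for $[\rho]$, and $|\nabla[\rho]_{ih}-\nabla[\rho]_{jh}|\le C\varepsilon$, the last obtained by inserting the exact $\nabla\rho$ and combining its smoothness with $\|\nabla\rho-\nabla[\rho]\|_\infty\le C(\varepsilon^k+h^\ell/\varepsilon^{\ell+1})\le C\varepsilon$ from Theorem \ref{theorem_convolution_quadrature} (valid since $r\ge k+1$, $s\ge\ell+1$ and $h\le\varepsilon^{1+2/\ell}$). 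The gained factor $\varepsilon$ cancels the $\varepsilon^{-1}$ from $\|\nabla\zeta_\varepsilon\|_{1,h}$, and a final Cauchy--Schwarz/discrete Young estimate of the double sum yields $|S_2|\le C\varepsilon\cdot\varepsilon^{-1}\|e\|_{2,h}^2=C\|e\|_{2,h}^2$. I expect this trade of the derivative blow-up $\varepsilon^{-1}$ against the Lipschitz smallness $\varepsilon$ of the coefficients of $f$ to be the crux of the proof.

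Finally, $S_3$ is where the bootstrap hypothesis \eqref{btassumption} enters. Estimating $|[e]_{ih}|^2\le\|[e]\|_{\infty,h}\,|[e]_{ih}|$ and $|\nabla[e]_{ih}|^2\le\|\nabla[e]\|_{\infty,h}\,|\nabla[e]_{ih}|$, then using Cauchy--Schwarz together with $\|[e]\|_{2,h}\le C\|e\|_{2,h}$ and $\|\nabla[e]\|_{2,h}\le C\varepsilon^{-1}\|e\|_{2,h}$, gives
\[
|S_3|\le C\big(\|[e]\|_{\infty,h}+\varepsilon^{-1}\|\nabla[e]\|_{\infty,h}\big)\|e\|_{2,h}^2.
\]
The Bessel-type bounds of Theorem \ref{theorem_convolution_quadrature_stability} (with $q=2$) give $\|[e]\|_{\infty,h}\le C\varepsilon^{-n/2}\|e\|_{2,h}$ and $\|\nabla[e]\|_{\infty,h}\le C\varepsilon^{-n/2-1}\|e\|_{2,h}$, so $|S_3|\le C\varepsilon^{-n/2-2}\|e\|_{2,h}^3$. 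Inserting the bootstrap bound $\|e\|_{2,h}\le C_1\varepsilon^{2+n/2}$ into one factor removes exactly the negative power of $\varepsilon$ and yields $|S_3|\le CC_1\|e\|_{2,h}^2$. Adding the three contributions gives $|e_s(t)|\le C\|e(t)\|_{2,h}^2$, as claimed; $S_1$ is routine and $S_3$ is a direct consequence of the bootstrap assumption, while $S_2$ is the genuinely delicate step.
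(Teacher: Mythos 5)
Your proof is correct, and its engine is the same as the paper's: the symmetrisation over index pairs that exploits the oddness of $\nabla\zeta_\varepsilon$ (evenness of $\zeta$), so that the $\varepsilon^{-1}$ coming from $\|\nabla\zeta_\varepsilon\|_{1,h}$ is cancelled by an $O(\varepsilon)$ Lipschitz-type difference of the $f$-derivative coefficients, with Theorem \ref{theorem_convolution_quadrature}, Lemma \ref{lemma_core_norm_estimate}, the Bessel bound of Theorem \ref{theorem_convolution_quadrature_stability} and the bootstrap hypothesis \eqref{btassumption} closing the argument. The difference lies in the decomposition, and it is not merely cosmetic. The paper splits $e_s$ into two first-order differences ($e_{s1}$ varying the gradient slot, $e_{s2}$ varying the function slot) and applies the mean value theorem to each; the resulting coefficients $Df|_{\xi_i}$ are evaluated at intermediate points $\xi_i$ that depend on the unknown approximation $\nabla[\rho^{\varepsilon h}]$, so the symmetrised difference $\Delta_{ij}$ contains $|\xi_i-\xi_j|$, which must be controlled by $\|\nabla[\rho]-\nabla[\rho^{\varepsilon h}]\|_{\infty,h}$ --- hence the bootstrap assumption is already needed inside the bound for the linear (gradient) term. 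You instead Taylor-expand once, to second order, around the \emph{computable} state $(t,ih,[\rho]_{ih},\nabla[\rho]_{ih})$: your linear coefficients $w_i$ are then deterministic functions of $[\rho]$, so $|w_i-w_j|\le C\varepsilon$ follows from the regularity of $\rho$ and Theorem \ref{theorem_convolution_quadrature} alone, and both first-order terms $S_1,S_2$ are bounded by $C\|e\|_{2,h}^2$ \emph{unconditionally}; the bootstrap \eqref{btassumption} enters only through the quadratic remainder $S_3$, where it neutralises the factor $\varepsilon^{-n/2-2}$ exactly as in the paper's treatment of $e_{s1}$. This yields a slightly cleaner logical structure (linear part unconditional, quadratic part conditional), at the price of invoking bounded second derivatives of $f$ along the whole segment between the two states rather than only first-derivative differences; under the paper's standing assumption that $f$ is smooth with support in $\widetilde{M}$, both requirements are equally available.
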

\begin{proof}
 We start by further splitting the error $e_s$ into
\begin{eqnarray*}
e_s &=&  - h^n\sum_{i\in\Z^n} e_i \left(f(t,ih,[\rho]_{ih},\nabla[\rho]_{ih}) -
f(t,ih,[\rho^{\varepsilon h}]_{ih},\nabla[\rho^{\varepsilon
    h}]_{ih})\right)\\
& =:& e_{s1}+e_{s2} 
\end{eqnarray*}
with 
\begin{eqnarray}  
e_{s1} &:=& -h^n\sum_{i\in\Z^n} e_i
\left(f(t,ih,[\rho]_{ih},\nabla[\rho]_{ih}) -
f(t,ih,[\rho]_{ih},\nabla[\rho^{\varepsilon h}]_{ih})\right) \label{stability_1},\\ 
 e_{s2}&:= & -h^n\sum_{i\in\Z^n} e_i
\left(f(t,ih,[\rho]_{ih},\nabla[\rho^{\varepsilon h}]_{ih}) -
f(t,ih,[\rho^{\varepsilon h}]_{ih},\nabla[\rho^{\varepsilon
  h}]_{ih})\right) \label{stability_2}. 
\end{eqnarray}
Note that our assumption on the support of $f$ means that all the sums are
actually finite sums, summing at most over those indices $i\in\Z^n$
with $ih\in \Omega_1$.

We will now bound each term separately, starting with
(\ref{stability_1}). Using once again the mean value theorem, this
time only with respect to the last argument of $f$, yields positions
$\xi_i\in\R^n$ on the line segment connecting $\nabla[\rho]_{ih}$ and
$\nabla[\rho^{\varepsilon 
    h}]_{ih}$ such that
\begin{eqnarray}
e_{s1} & = & 
 -h^n\sum_{i\in\Z^n} e_i \left(f(t,ih,[\rho]_{ih},\nabla[\rho]_{ih}) -
 f(t,ih,[\rho]_{ih},\nabla[\rho^{\varepsilon h}]_{ih})\right) \nonumber\\ 
 &=& -h^n\sum_{i\in\Z^n} e_i D
 f(t,ih,[\rho]_{ih},\cdot)|_{\xi_i}\cdot\left(\nabla[\rho]_{ih} -
 \nabla[\rho^{\varepsilon h}]_{ih}\right) \nonumber\\ 
 &=& -h^n\sum_{i\in\Z^n} e_i D
 f(t,ih,[\rho]_{ih},\cdot)|_{\xi_i}\cdot\left(h^n\sum_{j\in\Z^n}(\rho_j-\rho_j^{\varepsilon
   h})\nabla\zeta_\varepsilon(ih-jh)\right) \nonumber\\ 
 &=& -h^{2n}\sum_{i,j\in\Z^n} e_ie_j D
 f(t,ih,[\rho]_{ih},\cdot)|_{\xi_i}\cdot\nabla\zeta_\varepsilon(ih-jh). \label{es1-1}
\end{eqnarray}

Next, note that the assumption that the kernel $\zeta$ is an
even function means in particular that 
$\nabla\zeta_\varepsilon(-\cdot)=-\nabla\zeta_\varepsilon(\cdot)$ and
hence $\nabla\zeta_\varepsilon(0)=0$. To make use of this
anti-symmetry, we partition $\Z^n\times \Z^n$ 
disjointly into $\Z^n\times\Z^n=\Lambda\cup\overline{\Lambda}\cup
\{(i,i) : i\in\Z^n\}$ where $\Lambda,
\overline{\Lambda}\subseteq\Z^n\times\Z^n$ are such that for every
$i\neq j$ we have $(i,j)\in\Lambda$ if and only if
$(j,i)\in\overline\Lambda$.  

Since $\nabla \zeta_{\varepsilon}(0)=0$, we see that we can ignore all
entries in the sum (\ref{es1-1}) corresponding to indices $(i,i)$, $i\in\Z^n$.
Hence, we can continue to rewrite $e_{s1}$ by
\begin{eqnarray}
e_{s1} & =&  -h^{2n}\sum_{i,j\in\Z^n} e_ie_j D
 f(t,ih,[\rho]_{ih},\cdot)|_{\xi_i}\cdot\nabla\zeta_\varepsilon(ih-jh)\nonumber\\
 &=& -h^{2n}\left(\sum_{(i,j)\in\Lambda} +\sum_{(i,j)\in\overline\Lambda}\right) e_ie_j D
f(t,ih,[\rho]_{ih},\cdot)|_{\xi_i}\cdot\nabla\zeta_\varepsilon(ih-jh) \nonumber\\ 
& =& -h^{2n}\sum_{(i,j)\in\Lambda}  e_ie_j D
f(t,ih,[\rho]_{ih},\cdot)|_{\xi_i}\cdot\nabla\zeta_\varepsilon(ih-jh) \nonumber\\ 
&  &\mbox{} -
h^{2n}\sum_{(j,i)\in\overline\Lambda} e_je_i D
f(t,jh,[\rho]_{jh},\cdot)|_{\xi_j}\cdot\nabla\zeta_\varepsilon(jh-ih) \nonumber\\ 
& = & -h^{2n}\sum_{(i,j)\in\Lambda}  e_ie_j D
f(t,ih,[\rho]_{ih},\cdot)|_{\xi_i}\cdot\nabla\zeta_\varepsilon(ih-jh) \nonumber\\ 
 & &\mbox{}-
h^{2n}\sum_{(i,j)\in\Lambda} e_je_i D
f(t,jh,[\rho]_{jh},\cdot)|_{\xi_j}\cdot\left(-\nabla\zeta_\varepsilon(ih-jh)\right)
\nonumber\\ 
 &=& -h^{2n}\sum_{(i,j)\in\Lambda} e_ie_j \left(D
f(t,ih,[\rho]_{ih},\cdot)|_{\xi_i} - D
f(t,jh,[\rho]_{jh},\cdot)|_{\xi_j}\right)\cdot\nabla\zeta_\varepsilon(ih-jh)\nonumber\\ 
 &=& -h^{2n}\sum_{(i,j)\in\Lambda} e_ie_j
\Delta_{ij}\cdot\nabla\zeta_\varepsilon(ih-jh),\label{es1-2}
\end{eqnarray}
where we have introduced the notation
\[
\Delta_{ij}:= D
f(t,ih,[\rho]_{ih},\cdot)|_{\xi_i} - D
f(t,jh,[\rho]_{jh},\cdot)|_{\xi_j},
\]
which obviously satisfies $\Delta_{ij}=-\Delta_{ji}$. Moreover, we can
simply set $\Delta_{ij}=0$ for $(i,j)\in\Lambda$ satisfying $|ih-jh|\ge
\varepsilon$, since in this situation our compactly supported kernel
makes sure that the corresponding term in (\ref{es1-2}) is already
zero.

Having this in mind, we can now derive the following bound on
$e_{s1}$: 
\begin{eqnarray}
|e_{s1}| 
 &\leq& h^{2n}\sum_{(i,j)\in\Lambda} \frac{1}{2}(e_i^2+e_j^2)
|\Delta_{ij}|\,|\nabla\zeta_\varepsilon(ih-jh)|\nonumber\\ 
 &\leq& h^{2n}\sum_{i,j\in\Z^n} e_i^2
|\Delta_{ij}|\,|\nabla\zeta_\varepsilon(ih-jh)|\nonumber\\ 
 &=& h^{n}\sum_{i\in\Z^n} e_i^2 \left(h^n\sum_{j\in\Z^n}|\Delta_{ij}|\,|
\nabla\zeta_\varepsilon(ih-jh)|\right)\nonumber\\ 
 &\leq& \|e\|_{2,h}^2 \max_{i\in\Z^n}
h^n\sum_{j\in\Z^n}|\Delta_{ij}|\,|\nabla\zeta_\varepsilon(ih-jh)|. \label{stability_1_beforeDifferenceEstimation} 
\end{eqnarray}
To bound this further, we need a thorough estimate for the term
$|\Delta_{ij}|$, which, in particular, has to compensate the $1/\varepsilon$
factor coming from $|\nabla\zeta_\varepsilon(ih-jh)|$. We have
\begin{eqnarray}
|\Delta_{ij}| &=& \left| D f(t,ih,[\rho]_{ih},\xi_i)
 - D f(t,jh,[\rho]_{jh},\xi_j)\right| \nonumber\\
&\leq& \|f\|_{W_{\infty}^2(\widetilde{M})}\left(|ih-jh|+|[\rho]_{ih}-[\rho]_{jh}|+ 
|\xi_i-\xi_j|\right) \nonumber\\
&\leq& \|f\|_{W_{\infty}^2(\widetilde{M})}\left(\varepsilon+|[\rho]_{ih}-[\rho]_{jh}|+ 
|\xi_i-\xi_j|\right) \label{stability_1_differenceEstimation},
\end{eqnarray}
where we used the fact that we only have to consider indices $(i,j)$ with
$|ih-jh|\le \varepsilon$. To continue our estimate, we use
\begin{equation}\label{est2}
 |\xi_i-\xi_j| \leq |\xi_i-
 \nabla[\rho]_{ih}|+|\nabla[\rho]_{ih}-\nabla[\rho]_{jh}|+|\nabla[\rho]_{jh}
 -\xi_j|. 
\end{equation}
Here, the second term on the right-hand side, as well as the term
$|[\rho]_{ih}-[\rho]_{jh}|$ in 
\eqref{stability_1_differenceEstimation} can be bounded as
follows. For $\alpha\in\N_0^n$ with $|\alpha|=0$ or $|\alpha|=1$ we
have, using Theorem \ref{theorem_convolution_quadrature},
\begin{eqnarray*}
|\partial^\alpha[\rho]_{ih}-\partial^\alpha[\rho]_{jh} | & \le & 
|\partial^\alpha[\rho]_{ih}-\partial^\alpha\rho_{ih}| +
|\partial^\alpha\rho_{ih}-\partial^\alpha\rho_{jh}| +
|\partial^\alpha\rho_{jh}-\partial^\alpha[\rho]_{jh}|\\
& \le & C(\rho)\left(
\varepsilon^k+\frac{h^\ell}{\varepsilon^{\ell+|\alpha|}}\right)+ 
\|\rho\|_{W_\infty^1(\Omega)} |ih-jh|\\
& \le& C(\rho)\varepsilon,
\end{eqnarray*}
since $k\ge 1$ and $h\le \varepsilon^{1+2/\ell}$.
For the first and last term in (\ref{est2}) we use the fact that $\xi_i$
lies on the line segment connecting $\nabla[\rho]_{ih}$ and
$\nabla[\rho^{\varepsilon h}]_{ih}$ such that we can derive the estimate
\begin{eqnarray*}
|\Delta_{ij}| &\leq& \|f\|_{W_\infty^2(\widetilde{M})}\left(\varepsilon+|[\rho]_{ih}-[\rho]_{jh}|+
 |\xi_i-\xi_j|\right) \\
&\leq &C(f,\rho)\left(\varepsilon +
 \|\nabla[\rho]_{.}-\nabla[\rho^{\varepsilon h}]_{.}\|_{\infty,h}\right).
\end{eqnarray*}

If we plug this into \eqref{stability_1_beforeDifferenceEstimation}
and use Lemma \ref{lemma_core_norm_estimate} and Theorem
\ref{theorem_convolution_quadrature_stability} (which is possible since we
have $s>n+1$) we find
\begin{eqnarray}
|e_{s1}| 
 &\leq& \|e\|_{2,h}^2C(f,\rho)\left(\varepsilon +
\|\nabla[\rho]-\nabla[\rho^{\varepsilon
  h}]\|_{\infty,h}\right) \|\nabla\zeta_\varepsilon\|_{1,h}
\nonumber\\ 
 &\leq& \|e\|_{2,h}^2C(f,\rho)\left(\varepsilon +
C\varepsilon^{-1-\frac{n}{2}}\|\rho-\rho^{\varepsilon
  h}\|_{2,h}\right)\varepsilon^{-1}\nonumber\\ 
 &\leq& \|e\|_{2,h}^2C(f,\rho)\left(\varepsilon +
C\varepsilon^{-1-\frac{n}{2}}\|e\|_{2,h}\right)\varepsilon^{-1}. \nonumber\\ 
& = &
C(f,\rho)\left(1+C\varepsilon^{-2-\frac{n}{2}}\|e\|_{2,h}\right)\|e\|_{2,h}^2. 
\label{stability_1_final} 
\end{eqnarray}
Obviously, if condition (\ref{btassumption}) holds, i.e. if we
have $ \|e\|_{2,h} \leq C_1\varepsilon^{2+\frac{n}{2}}$, then it
immediately follows from \eqref{stability_1_final} that we also have
\[
|e_{s1}|\le  C(f,\rho)\|e\|_{2,h}^2.
\]

Finally, we need to bound the second part \eqref{stability_2} of the
stability error. As before, we will use the mean value theorem and
denote the intermediate positions by $\xi_i$ again. This time, we have
\begin{eqnarray*}
e_{s2} & = & 
 -h^n\sum_{i\in\Z^n} e_i
 \left(f(t,ih,[\rho]_{ih},\nabla[\rho^{\varepsilon h}]_{ih}) -
 f(t,ih,[\rho^{\varepsilon h}]_{ih},\nabla[\rho^{\varepsilon
   h}]_{ih})\right) \nonumber\\ 
 &=& -h^n\sum_{i\in\Z^n} e_i Df(t,ih,\cdot,\nabla[\rho^{\varepsilon
   h}]_{ih})|_{\xi_i}\left([\rho]_{ih}-[\rho^{\varepsilon h}]_{ih}\right)
 \nonumber\\ 
 &=& -h^n\sum_{i\in\Z^n} e_i Df(t,ih,\cdot,\nabla[\rho^{\varepsilon
   h}]_{ih})|_{\xi_i}|\left(h^n\sum_{j\in\Z^n}(\rho_j-\rho_j^{\varepsilon
   h})\zeta_\varepsilon(ih-jh)\right) \nonumber\\  
 &=& -h^{2n}\sum_{i,j\in\Z^n} e_ie_j Df(t,ih,\cdot,\nabla[\rho^{\varepsilon
   h}]_{ih})|_{\xi_i}|\zeta_\varepsilon(ih-jh).
\end{eqnarray*}
This gives the bound
\begin{eqnarray}
|e_{s2}| 
 &\leq& h^{2n}\|f\|_{W_\infty^1(\widetilde{M})} \sum_{i,j\in\Z^n} |e_ie_j|
|\zeta_\varepsilon(ih-jh)|\nonumber\\ 
 &\leq& C(f)h^{n}\sum_{i\in\Z} e_i^2\;
h^n\sum_{j\in\Z^n}|\zeta_\varepsilon(ih-jh)|\nonumber\\ 
& = & C(f) \left( h^n\sum_{i\in\Z^n} e_i^2
  \right)\left(h^n\sum_{j\in\Z^n}\zeta_\varepsilon(jh)\right) \nonumber\\
 &\leq& C(f)\|e\|_{2,h}^2,
\end{eqnarray}
which, together with (\ref{stability_1_final}), proves the statement of
the theorem.
\end{proof}

\noindent {\bf Proof of Theorem \ref{main_result}:}
Taking the results of Proposition \ref{propec} and Theorem
\ref{theoes} together and assuming (\ref{btassumption}), we see that
\[
 \frac{1}{2}\frac{d}{dt}\|e\|_{2,h}^2 \leq C(f,\rho)\left(\varepsilon^k + \frac{h^\ell}{\varepsilon^{\ell+1}}\right)^2 + C(f,\rho)\|e\|_{2,h}^2.
\]
Hence, applying Gronwall's inequality to this, yields
\begin{equation}\label{eestimate}
 \|e\|_{2,h} \leq C(f,\rho, T)\left(\varepsilon^k +
 \frac{h^\ell}{\varepsilon^{\ell+1}}\right) 
\end{equation}
uniformly for all $t\in[0,T]$. With this, we can justify the
assumption (\ref{btassumption}) since we have 
\[
\varepsilon^k +
 \frac{h^\ell}{\varepsilon^{\ell+1}} \le C
 \varepsilon^{2+\frac{n}{2}},
\]
provided that $k\ge 2+n/2$ and $h\le
\varepsilon^{1+(3+\frac{n}{2})/\ell}$.

Next, we split the $L_\infty(\R^n)$ error as follows
\begin{equation}\label{finalsplit}
\|\partial^\alpha \rho - \partial^\alpha[\rho^{\varepsilon
    h}]\|_{L_\infty(\R^n)} 
\le \|\partial^\alpha \rho - \partial^\alpha[\rho]\|_{L_\infty(\R^n)}
 +
\|\partial^\alpha [\rho] - \partial^\alpha[\rho^{\varepsilon
    h}]\|_{L_\infty(\R^n)}. 
\end{equation}
We can bound the first term on the right-hand side using Theorem
\ref{theorem_convolution_quadrature} by 
\[
 \|\partial^\alpha \rho - \partial^\alpha[\rho]\|_{L_\infty(\R^n)}
\le C(\rho) \left(\varepsilon^k + \frac{h^\ell}{\varepsilon^{\ell+|\alpha|}}\right)
\]
Since this expression will be dominated by the second term in
(\ref{finalsplit}), we can ignore it. The second term is finally
bounded by 
\begin{eqnarray*}
\|\partial^\alpha [\rho] - \partial^\alpha[\rho^{\varepsilon
    h}]\|_{L_\infty(\R^n)} & \le &
C \varepsilon^{-\frac{n}{2}-|\alpha|} \|\rho-\rho^{\varepsilon h}\|_{2,h} \\
& \le & C \varepsilon^{-\frac{n}{2}-|\alpha|}
\left(\varepsilon^k+\frac{h^\ell}{\varepsilon^{\ell+1}}\right) 
\end{eqnarray*}
using (\ref{finalestimate}) from Theorem
\ref{theorem_convolution_quadrature_stability} and (\ref{eestimate}).
\hfill\qed

\section{Construction of High Order Kernels}

A key ingredient of the method is the availability of high-order
kernels. There are some ways of constructing such kernels, and here, we will 
follow ideas from \cite{Beale-Majda-85-1}, see also
\cite{Majda-Bertozzi-02-1}, to construct compactly supported kernels
of any prescribed order.

To this end, we will employ radial kernels,
i.e. kernels of the form $\kernel(x) = \odkernel(|x|)$, $x\in\R^n$, with a
univariate function $\odkernel:[0,\infty)\to\R$.
For such radial kernels we can easily rewrite the conditions from
Definition \ref{definition_kernel}:
\begin{eqnarray*}
\int_{\R^n} x^\alpha \kernel(x) dx &=&  \int_0^\infty \int_{|x|=1}
(xr)^\alpha \kernel(rx)r^{n-1} dS(x) dr \\
& = &\int_{|x|=1} x^\alpha dS(x) \int_0^\infty r^{n-1+|\alpha|} \odkernel(r)
dr.
\end{eqnarray*}

Obviously the first integral over the unit sphere $S^{n-1}$ in $\R^n$
vanishes if $|\alpha|$ is odd. Thus the conditions of Definition
\ref{definition_kernel} can be 
rewritten as follows. The kernel $\kernel=\odkernel(|\cdot|)$ is of order $k=2\ell$ if
it  satisfies the following three conditions 
\begin{eqnarray}
\int_0^\infty \odkernel(r)r^{n-1} dr & = & \frac{1}{\omega_{n-1}},\label{c1} \\
\int_0^\infty \odkernel(r)r^{n+2j-1}dr & = & 0, \qquad 1\le j\le \ell-1\label{c2}\\
\int_0^\infty \odkernel(r)r^{n-1+k}dr &<&\infty\label{c3},
\end{eqnarray}
where $\omega_{n-1}$ denotes the surface area of the unit sphere
$S^{n-1}$ in $\R^n$.

We will use this to construct such kernels. To this end assume that we
have fixed, pairwise distinct 
values $a_j> 0 $ for $0\le j\le \ell-1$ and an even, continuous,
non-negative univariate function $\odkernelbase:\R\to\R$ with compact support and 
$\|\odkernelbase\|_{L_1(\R)}=1$. 

Then, we want to pick real numbers $\lambda_0,\ldots,\lambda_{\ell-1}$
such that 
\begin{equation}\label{ridge}
\odkernel(r) = \sum_{j=0}^{\ell-1} \lambda_j \odkernelbase(r/a_j), 
\end{equation}
defines a kernel of order $k=2\ell$.

Since $\odkernel$ also has compact support and is continuous, condition
(\ref{c3}) is automatically satisfied. Conditions (\ref{c1}) and
(\ref{c2}) can be summarised as follows. For $0\le i\le
\ell-1$ we need that
\begin{eqnarray*}
\frac{1}{\omega_{n-1}}\delta_{0i} & = & \sum_{j=0}^{\ell-1}\lambda_j
\int_0^\infty \odkernelbase(r/a_j)r^{n-1+2i} dr \\
& = & \sum_{j=0}^{\ell-1}\lambda_ja_j^{n+2i}\int_0^\infty \odkernelbase(s)
s^{n-1+2i} ds.
\end{eqnarray*}
Noting that the integral on the right-hand side is independent of the
summation index $j$ and that the left-hand side is zero except
for $i=0$ we see that we can rewrite this system as 
\[
\delta_{0i}=\sum_{j=0}^{\ell-1}\widetilde{\lambda}_j a_j^{2i}, \qquad
0\le i\le \ell-1,
\]
where we have set $\widetilde{\lambda}_j := \lambda_j a_j^n$. This
means that the solution vector $\widetilde{\lambda}\in\R^\ell$ is the
first column of the inverse of the matrix $A=(a_j^{2i})$, which simply
is the transpose of a Vandermonde matrix in
$a_0^2,\ldots,a_{\ell-1}^2$. This guarantees solvability. To be more
precise, we have the following result.

\begin{proposition}\label{constr}
Let $a_j>0 $, $0\le j\le \ell-1$, be pairwise distinct and let $\odkernelbase:\R\to\R$ be
non-negative, even, continuous, with compact support and satisfying
$\|\odkernelbase\|_{L_1(\R)}=1$. Then, there is exactly one radial kernel
$\kernel(x)=\odkernel(|x|)$, $x\in\R^n$, of order $k=2\ell$ with $\odkernel$ of the
form (\ref{ridge}). The coefficients are given by
\begin{eqnarray*}
\lambda_j &=&  \frac{(-1)^ja_0^2\cdots a_{j-1}^2a_{j+1}^2\cdots
  a_{\ell-1}^2}{a_j^n(a_j^2-a_0^2)\cdots 
  (a_j^2-a_{j-1}^2)(a_{j+1}^2-a_j^2)\cdots (a_{\ell-1}^2-a_j^2)}\\
& = & \frac{1}{a_j^n}\prod_{\substack{i=0\\i\ne j}}^{\ell-1}\frac{a_i^2}{a_i^2-a_j^2}, \qquad
0\le j\le \ell-1.
\end{eqnarray*}
\end{proposition}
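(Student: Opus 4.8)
The plan is to build on the reduction carried out in the paragraph preceding the statement. There the order conditions (\ref{c1})--(\ref{c3}) for the ansatz (\ref{ridge}) are, after the substitution $s=r/a_j$ and division by the radial moments $\mu_i:=\int_0^\infty\phi(s)s^{n-1+2i}\,ds$, recast as the linear system
\[
\sum_{j=0}^{\ell-1}\widetilde\lambda_j\,(a_j^2)^i=\delta_{0i},\qquad 0\le i\le\ell-1,
\]
for the rescaled coefficients $\widetilde\lambda_j:=\lambda_j a_j^n$; my task is then to solve this system and read off the coefficients. First I would record two small points that make the reduction legitimate. Condition (\ref{c3}) holds automatically: each $\phi(\cdot/a_j)$ is continuous with compact support, hence so is $\eta$, so $\int_0^\infty\eta(r)r^{n-1+k}\,dr<\infty$, and the same remark shows $\zeta=\eta(|\cdot|)$ is continuous and bounded, so that it qualifies as a kernel in the sense of Definition \ref{definition_kernel} once (\ref{c1}), (\ref{c2}) hold. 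And the division by $\mu_i$ is permissible because $\mu_i>0$: indeed $\phi\ge 0$ is continuous and not identically zero (as $\|\phi\|_{L_1(\R)}=1$), while $s^{n-1+2i}>0$ on $(0,\infty)$.

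For existence and uniqueness I would observe that the coefficient matrix $A=(a_j^{2i})_{0\le i,j\le\ell-1}$ is the transpose of the Vandermonde matrix in the nodes $x_j:=a_j^2$. Since the $a_j>0$ are pairwise distinct, so are the $x_j$, and hence $\det A=\prod_{i<j}(x_j-x_i)\ne 0$. Thus $A$ is invertible, the system has the unique solution $\widetilde\lambda=A^{-1}e_0$ (the first column of $A^{-1}$), and correspondingly the coefficients $\lambda_j=\widetilde\lambda_j/a_j^n$ are uniquely determined. This establishes that there is exactly one $\eta$ of the form (\ref{ridge}) yielding a kernel of order $k=2\ell$.

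For the closed form I would avoid inverting $A$ explicitly and instead argue by Lagrange interpolation. The condition $\sum_j\widetilde\lambda_j x_j^i=\delta_{0i}$ for $0\le i\le\ell-1$ is equivalent, by linearity in the monomial basis, to $\sum_j\widetilde\lambda_j\,q(x_j)=q(0)$ for every polynomial $q$ of degree at most $\ell-1$. Letting $q$ range over the Lagrange basis polynomials $L_m(t)=\prod_{i\ne m}(t-x_i)/(x_m-x_i)$ for the nodes $x_0,\dots,x_{\ell-1}$, which have degree $\ell-1$ and satisfy $L_m(x_j)=\delta_{mj}$, gives at once
\[
\widetilde\lambda_m=L_m(0)=\prod_{\substack{i=0\\ i\ne m}}^{\ell-1}\frac{-x_i}{x_m-x_i}=\prod_{\substack{i=0\\ i\ne m}}^{\ell-1}\frac{a_i^2}{a_i^2-a_m^2}.
\]
Dividing by $a_m^n$ produces the second (product) form of the asserted coefficient, and extracting from the $m$ denominator factors with $i<m$ the sign from $a_i^2-a_m^2=-(a_m^2-a_i^2)$ reassembles it into the first displayed form carrying the factor $(-1)^m$.

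There is no deep obstacle here; the proposition is elementary linear algebra once the order conditions are in Vandermonde form. The two points that actually require care are the elementary but indispensable observation that the radial moments $\mu_i$ are nonzero (this is precisely what licenses passing to the pure Vandermonde system), and the sign bookkeeping in the final step needed to reconcile the two displayed expressions for $\lambda_m$, i.e.\ tracking the $(-1)^m$ that arises when one reorders the factors $a_i^2-a_m^2$ into $a_m^2-a_i^2$.
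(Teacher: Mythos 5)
Your proof is correct, and it derives the coefficient formula by a genuinely different route than the paper. Both arguments rest on the same reduction (the transposed Vandermonde system $\sum_{j}\widetilde\lambda_j a_j^{2i}=\delta_{0i}$ with $\widetilde\lambda_j=\lambda_j a_j^n$, set up in the paragraphs preceding Proposition \ref{constr}), and both obtain existence and uniqueness from the nonvanishing of the Vandermonde determinant in the pairwise distinct nodes $a_j^2$. The difference lies in how the closed form is extracted: you reinterpret the system as the statement that $\sum_j\widetilde\lambda_j\,q(a_j^2)=q(0)$ for every polynomial $q$ of degree at most $\ell-1$ and then read off $\widetilde\lambda_m=L_m(0)$ by inserting the Lagrange basis polynomials $L_m$ for the nodes $a_i^2$, whereas the paper invokes Cramer's rule, $\widetilde\lambda_j=\det A_j/\det A$, and evaluates both determinants explicitly ($\det A$ as a Vandermonde determinant, and $\det A_j$ by expanding along the column containing the unit vector and factoring $b_i=a_i^2$ out of the remaining columns to leave a smaller Vandermonde determinant). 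The two computations are equivalent in substance --- the Cramer ratios are precisely the values $L_m(0)$ --- but your version avoids determinant calculus entirely and makes the product structure of the answer transparent, while the paper's is a self-contained, more hands-on matrix computation. Your preliminary remarks also fill in points the paper leaves implicit: that condition (\ref{c3}) and the continuity and boundedness demanded by Definition \ref{definition_kernel} are automatic from the compact support of $\odkernelbase$, and that dividing out the radial moments $\mu_i=\int_0^\infty\odkernelbase(s)s^{n-1+2i}\,ds$ is licensed by $\mu_i>0$; both observations are correct and needed for the reduction to the pure Vandermonde system. Finally, your sign bookkeeping (the $m$ factors $a_i^2-a_m^2$ with $i<m$ each contribute $-1$, producing the $(-1)^m$) correctly reconciles the two displayed forms of $\lambda_m$.
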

\begin{proof}
Using the notation from the paragraphs above, 
Cramer's rule shows that $\widetilde{\lambda_j}=\det A_j/\det A$,
where $A_j$ is the matrix resulting from $A$ by replacing the $j$-th
column with the first unit vector. Since, $A$ is the transpose of a
Vandermonde matrix, its determinant is given by
\[
\det (A) = \prod_{0\le \mu<\nu\le \ell-1} (a_\nu^2-a_\mu^2).
\]
Moreover, in our situation the determinant of $A_j$ is easily
determined. Setting $b_j=a_j^2$, we have

\begin{eqnarray*}
\det(A_j)& = & \det \begin{pmatrix}
1  & \hdots&   1      &  1 &1  &   \hdots     & 1        \\
b_0& \hdots &  b_{j-1} & 0 & b_{j+1} & \hdots & b_{\ell-1} \\
b_0^2& \hdots &  b_{j-1}^2 & 0 & b_{j+1}^2 & \hdots & b_{\ell-1}^2 \\
\vdots&       &\vdots    & \vdots&\vdots&        &\vdots\\
b_0^{\ell-1}& \hdots &  b_{j-1}^{\ell-1} & 0 & b_{j+1}^{\ell-1} &
\hdots & b_{\ell-1}^{\ell-1} \\ 
\end{pmatrix}\\
& = & (-1)^j
\det \begin{pmatrix}
b_0&   \hdots &  b_{j-1}   &  b_{j+1}   & \hdots & b_{\ell-1} \\
b_0^2& \hdots &  b_{j-1}^2 &  b_{j+1}^2 & \hdots & b_{\ell-1}^2 \\
\vdots&      &\vdots     &\vdots     &        &\vdots\\
b_0^{\ell-1}& \hdots &  b_{j-1}^{\ell-1}  & b_{j+1}^{\ell-1} &
\hdots & b_{\ell-1}^{\ell-1} \\ 
\end{pmatrix}\\
& = & (-1)^j\left(\prod_{\substack{i=0 \\i\ne j}}^{\ell-1}b_i\right)
\det \begin{pmatrix}
1  & \hdots&     1    & 1  &    \hdots    & 1        \\
b_0& \hdots &  b_{j-1} & b_{j+1} & \hdots & b_{\ell-1} \\
b_0^2& \hdots &  b_{j-1}^2 & b_{j+1}^2 & \hdots & b_{\ell-1}^2 \\
\vdots&       &\vdots    &\vdots&        &\vdots\\
b_0^{\ell-2}& \hdots&b_{j-1}^{\ell-2}  & b_{j+1}^{\ell-2} &
\hdots & b_{\ell-1}^{\ell-2} \\ 
\end{pmatrix}\\
& = & (-1)^j\left(\prod_{\substack{i=0\\i\ne j}}^{\ell-1}b_i\right)\left(
\prod_{\substack{0\le \mu<\nu\le \ell-1\\\mu,\nu\ne j}}(b_\nu-b_\mu)\right)\\
\end{eqnarray*}
which gives the stated form.
\end{proof}

This simple proof also follows from more general results on
Vandermonde matrices, see for example
\cite{Bjoerck-Pereyra-70-1,Eisinberg-Fedele-06-1} and the references
therein. 

Here, we propose to use the following compactly supported kernels. We
start with radial kernels 
$\odkernelbase(r)=\odkernelbase_{n,k}(r)=c_{n,k}(1-r)_+^{\ell(n,k)}p_{n,k}(r)$ from
\cite{Wendland-95-1, Wendland-05-1}, see also Table
\ref{tab1}. These kernels are known to have smoothness $C^{2k}(\R^n)$ and are as
non-negative, radial kernels of order $2$. The cases $n=2$ and $n=3$ only differ 
in the constant $c_{n,k}$, which has to
be chosen such that the kernels satisfy (\ref{c1}).

\begin{table}
\begin{center}
\begin{tabular} {|l|l|l|}\hline
$n = 1$ & $\odkernelbase_{1,0}(r) = (1-r)_+   $& $C^0$ \\[0.5ex]
        & $\odkernelbase_{1,1}(r) = \frac{5}{4}(1-r)_+^3(3r+1)$ & $C^2$ \\[0.5ex]
        & $\odkernelbase_{1,2}(r) = \frac{3}{2}(1-r)_+^5(8r^2+5r+1)$ & $C^4$ \\[0.5ex]
        & $\odkernelbase_{1,3}(r) = \frac{55}{32}(1-r)_+^7(21r^3+19r^2+7r+1)$ & $C^6$ \\[0.2ex]\hline
$n = 2$ & $\odkernelbase_{2,0}(r) = \frac{6}{\pi}(1-r)^2_+$ & $ C^0$ \\[0.5ex]
        & $\odkernelbase_{2,1}(r) = \frac{7}{\pi}(1-r)^4_+(4r+1)$ & $C^2$ \\[0.5ex]
        & $\odkernelbase_{2,2}(r) = \frac{3}{\pi}(1-r)^6_+(35r^2+18r+3)$ &$C^4$ \\[0.5ex]
        & $\odkernelbase_{2,3}(r) = \frac{78}{7\pi}(1-r)_+^8(32r^3+25r^2+8r+1)$ &$C^6$\\[0.2ex]\hline
$n = 3$ & $\odkernelbase_{3,0}(r) = \frac{15}{2\pi}(1-r)^2_+$ & $ C^0$ \\[0.5ex]
        & $\odkernelbase_{3,1}(r) = \frac{21}{2\pi}(1-r)^4_+(4r+1)$ & $C^2$ \\[0.5ex]
        & $\odkernelbase_{3,2}(r) = \frac{165}{32\pi}(1-r)^6_+(35r^2+18r+3)$ &$C^4$ \\[0.5ex]
        & $\odkernelbase_{3,3}(r) = 
        \frac{1365}{64\pi}(1-r)_+^8(32r^3+25r^2+8r+1)$ &$C^6$\\[0.2ex]\hline
\end{tabular}
\caption {Table of functions\label{tab1}}
\end{center}
\end{table}

For higher order kernels we employ the construction from Proposition
\ref{constr}. Here, we are free to choose the parameters $a_j$. It
might be interesting to discuss the optimal choice of these parameters with
respect to, for example, stability of the evaluation of the
kernels. Here, however, we made the choice given in Table \ref{tab2},
which also contains the corresponding weights $\lambda_j$.

\begin{table}
\begin{center}
\begin{tabular}{|c|ccc|ccc|}\hline
Order & $a_0$ & $a_1$ & $a_2$  & $\lambda_0$ & $\lambda_1$ & $\lambda_2$\\\hline
4     & 1.0  & 4/5  &        & -16/9       &125/26 &\\ \hline
6     & 1.0   & 4/5  & 3/5    & 1.0         &-125/28 & 125/21\\\hline
\end{tabular}
\caption{Possible coefficients and weights for fourth and sixth order
  kernels.\label{tab2}}.
\end{center}
\end{table}

Note that these coefficients can be used for {\em any} radial kernel
with compact support. In the following, we will denote a kernel of
smoothness $s$ and order $k$ by $\odkernel=\odkernel^{k,s}$, i.e.~for
$\kernel^{k,s}(\cdot)=\odkernel^{k,s}(|\cdot|)$ we have 
$\kernel^{k,s}\in \cK^{k,s}$. Hence, typical examples are
\begin{eqnarray*}
\odkernel^{2,2}(r) &=& \odkernelbase_{n,1}(r)\\
\odkernel^{4,4}(r) &=& -\frac{16}{9}\odkernelbase_{n,2}(r) +\frac{125}{26}\odkernelbase_{n,2}(5r/4)\\
\odkernel^{4,6}(r) &=& -\frac{16}{9}\odkernelbase_{n,3}(r) +\frac{125}{26}\odkernelbase_{n,3}(5r/4),\\
\odkernel^{6,6}(r) &=& \odkernelbase_{n,3}(r) -\frac{125}{28}\odkernelbase_{n,3}(5r/4) +
\frac{125}{21}\odkernelbase_{3,3}(5r/3).
\end{eqnarray*}
Some of these kernels are also depicted in Figure \ref{fig1}.

\begin{figure}[htb]
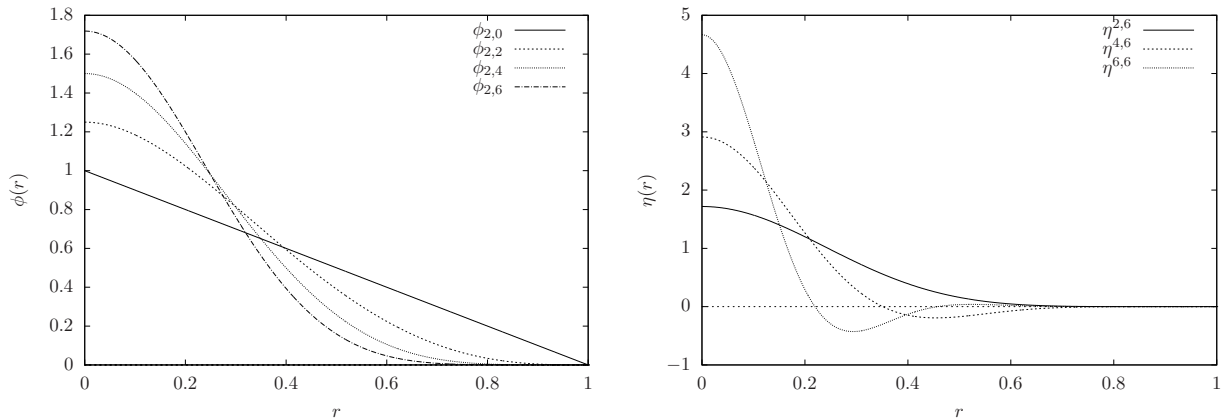

\centering
\resizebox{0.49\textwidth}{!}{\input{./plot_odkernelbase}}
\resizebox{0.49\textwidth}{!}{\input{./plot_odkernel}}
\vspace{-0.2cm}
\caption{Some of the kernels.}
\label{fig1}
\end{figure}

\section{Numerical Example}

We are now going to test our method by looking at a numerical example
in one space dimension. To be more precise, we consider the following
Burgers equation. For a given  $T>0$ and $\solution\in C^r(\R)$ with compact
support, we are looking for the solution $\rho$ of 
\begin{align*}
 \partial_t \rho - \rho\partial_x\rho & = [\left(1-\solution\right)\solution'](x+t),  && \text{on } (0,T] \times \R,\\
             \rho|_{t=0}                 & = \solution, && \text{on } \R.
\end{align*}
 Thus, in our initial setting, the defining function $f$ is given by
\[
f(t,x,\rho,\partial_x\rho) =  \rho(t,x)\partial_x\rho(t,x) +
\left[\left(1-\solution\right)\solution'\right](x+t),\qquad (t,x)\in[0,T]\times\R. 
\]

It is easy to see that $\rho(t,x)=\solution(x+t)$ solves the problem above. 
Hence, we can control the smoothness of the
solution. Moreover, if we pick the initial data $\solution$ with compact
support in the one-dimensional {\em ball} $[-\delta,\delta]$ then the
support of $\rho(t,\cdot)$ is in $[-t-\delta,-t+\delta]$ 
and hence, for a fixed
$T>0$, the support of $\rho(t,\cdot)$, $t\in [0,T]$, is a subset of the 
interval $\Omega=[-\delta-T,\delta]$. Thus, the assumption on the
solution from Theorem \ref{main_result} are easily satisfied and we can try to
verify the convergence orders claimed  therein. To this end, we
carried out the following two test series.

\begin{enumerate}
 \item[(A)] In the first series we have chosen the kernel and the
   solution such that Theorem \ref{main_result} is applicable. To be
   more precise,  we used  $\kernel=\odkernel^{4,4}(|\cdot|)\in
   \cK^{4,4}$ as the kernel for our scheme.  The kernel has been built
   as described in the last section using the underlying function
   $\odkernelbase_{1,2}$  and the coefficients given in Table \ref{tab2}.  

The initial data was given by 
$\solution=\delta^{-1}\tilde\odkernel^{4,4}(\cdot/\delta)$ with
$\delta=0.5$. To avoid any unwanted, positive side effects,
which might result from chosing the same function as the approximation
kernel and the initial data, we have used $\odkernelbase_{2,2}$ as the
underlying kernel to built $\solution$. Of course,  we have chosen
$c_{2,2}$ as  $\frac{9}{16}$ for the kernel so that
$\|\odkernelbase_{2,2}\|_{L_1(\R)}=1$ is satisfied.

\item [(B)] The purpose of the second series was to investigate whether 
our scheme also shows convergence in situations with less regularity
and order of the kernel than required by our main result.

To this end, we used 
$\kernel=\odkernel^{2,2}(|\cdot|)=\odkernelbase_{1,1}(|\cdot|)\in \cK^{2,2}$ 
as the kernel and  $\solution=\delta^{-1}\odkernelbase_{2,1}(\cdot/\delta)$ 
as the initial data with $c_{2,1}=\frac{3}{2}$ to achieve 
$\|\odkernelbase_{2,1}\|_{L_1(\R)}=1$ again.
\end{enumerate}

In both cases we have chosen $\delta=T=0.5$, meaning 
$\Omega=[-1.0,0.5]$. Our computation was then restricted to those data sites 
in $\Omega_1=[-1.25,0.75]$, which, for computational reasons, we have chosen 
slightly smaller than the one we have used in Theorem \ref{main_result}.

The computations have been done for 
various values of $\varepsilon$ and $h$, both of the form $2^{-\nu}$. For the
time discretisation we have chosen an explicit Runge-Kutta method of
order $4$. The numerical results indicated that there is, as expected,
a CFL condition. The theoretical analysis of the time discretisation
will be subject of a subsequent paper. Here, we simply have chosen the
time discretisation sufficiently small such that its error was
negligible.

\begin{table}[tbp]
\centering
\begin{tabular}{|c|c|c|c|c|c|c|c|c|}
\cline{2-9}
\multicolumn{1}{c|}{} & \multicolumn{8}{c|}{$\nu_\varepsilon$} \\
\hline
$\nu_h$ &   -6 & -7 & -8 & -9 & -10 & -11 & -12 & -13 \\ \hline
-9 &  4.83e-3 &  &  &  &  &  &  &  \\ 
-10 & 1.47e-3 & 7.09e-3 &  &  &  &  &  &  \\ 
-11 & 1.03e-4 & 2.27e-3 & 1.05e-2 &  &  &  &  &  \\ 
-12 & 1.18e-4 & 3.39e-5 & 3.31e-3 & 1.62e-2 &  &  &  &  \\ 
-13 & 1.18e-4 & 1.71e-5 & 6.72e-5 & 5.28e-3 & 2.47e-2 &  &  &  \\ 
-14 & 1.18e-4 & 1.55e-5 & 3.90e-6 & 1.11e-4 & 8.53e-3 & 3.48e-2 &  &  \\ 
-15 & 1.18e-4 & 1.54e-5 & 1.32e-6 & 4.60e-6 & 1.86e-4 & 1.33e-2 & 4.26e-2 &  \\ 
-16 &  & 1.54e-5 & 1.27e-6 & 2.29e-7 & 7.52e-6 & 3.19e-4 & 1.89e-2 & 4.67e-2 \\ 
-17 &  &  & 1.27e-6 & 1.49e-7 & 1.47e-7 & 1.29e-5 & 5.42e-4 & 2.33e-2 \\ 
-18 &  &  &  & 1.53e-7 & 1.57e-8 & 2.27e-7 & 2.21e-5 & 9.09e-4 \\ \hline
\multicolumn{1}{c}{}
\end{tabular}
\caption{Discrete $L_\infty$ errors for series (A) for various discretisation
  parameters $h=2^{\nu_h}$ and $\varepsilon=2^{\nu_\varepsilon}$.\label{tab3}}
\end{table}

\begin{table}[htb]
\centering
\begin{tabular}{|c|c|c|c|c|c|c|c|c|}
\cline{2-9}
\multicolumn{1}{c|}{} & \multicolumn{8}{c|}{$\nu_\varepsilon$} \\
\hline
$\nu_h$ & -6 & -7 & -8 & -9 & -10 & -11 & -12 & -13 \\ \hline
-9  & 3.12e-2 &  &  &  &  &  &  &  \\ 
-10 & 2.71e-2 & 1.92e-2 &  &  &  &  &  &  \\ 
-11 & 2.69e-2 & 1.15e-2 & 1.94e-2 &  &  &  &  &  \\ 
-12 & 2.68e-2 & 1.11e-2 & 5.24e-3 & 2.44e-2 &  &  &  &  \\ 
-13 & 2.68e-2 & 1.11e-2 & 4.54e-3 & 3.17e-3 & 3.20e-2 &  &  &  \\ 
-14 & 2.68e-2 & 1.11e-2 & 4.50e-3 & 1.89e-3 & 2.86e-3 & 4.03e-2 &  &  \\ 
-15 & 2.68e-2 & 1.11e-2 & 4.50e-3 & 1.82e-3 & 8.48e-4 & 3.27e-3 & 4.70e-2 &  \\ 
-16 &  & 1.11e-2 & 4.50e-3 & 1.81e-3 & 7.34e-4 & 4.70e-4 & 3.96e-3 & 5.14e-2 \\ 
-17 &  &  & 4.50e-3 & 1.81e-3 & 7.27e-4 & 3.00e-4 & 3.47e-4 & 4.62e-3 \\ 
-18 &  &  &  & 1.81e-3 & 7.27e-4 & 2.90e-4 & 1.23e-4 & 3.26e-4 \\ \hline
\end{tabular}
\caption{Discrete $L_\infty$ errors for series (B) for various discretisation
  parameters $h=2^{\nu_h}$ and $\varepsilon=2^{\nu_\varepsilon}$.\label{tab4}}
\end{table}

The results can be found in  Tables \ref{tab3} and \ref{tab4} for 
series (A) and (B), respectively. The conditions 
of Theorem \ref{main_result} require $h\le\varepsilon^{1+3.5/\ell}$,
which is why the tables only contain entries for $h<\varepsilon$.

In the context of particle methods it is an often encountered assumption that 
convergence is achieved in the so-called {\em stationary setting},
i.e. if the ratio between $h$ and $\varepsilon$ is fixed, meaning that
approximately the same number of data sites lies in the support of
each kernel. This assumption is in particular made in almost all
application papers, though it is well-known by now (see for example
\cite{Raviart-85-1}) that this is not true. Our results corroborate
this since we can see divergence in this situation by looking at the
diagonal entries of the Tables \ref{tab3} and \ref{tab4}.

As for the general dependence of the errors on the discretisation parameters
$h$ and $\varepsilon$, the data seem to verify our findings in the 
following way: If we
look at a fixed column of either table, which corresponds to a fixed
$\varepsilon$, we see that the error becomes stationary, i.e.~further
refinement of the grid does not lead to convergence.
If we look at the rows of the table, which corresponds to an
$\varepsilon$-refinement while $h$ is kept fixed, we see that the
error eventually grows.

Finally, we have tried to estimate the crucial constants and exponents
in the error estimate using a least-squares approach.
\begin{enumerate}
 \item [(A)] Here, Theorem \ref{main_result} can be applied with 
   parameters $r,s=4$, $k,\ell=3$. In this situation, Theorem
   \ref{main_result} yields an error bound of the form 
\begin{equation}\label{errorrep}
C_1 \varepsilon^{k-0.5}+C_2\frac{h^\ell}{\varepsilon^{\ell+1.5}}
=C_1 \varepsilon^{2.5}+C_2\frac{h^{3}}{\varepsilon^{4.5}}
\end{equation}
for the approximate solution. The least-squares approximation
yields the better estimate
\[
68.7\varepsilon^{3.2} + 1.8\frac{h^{3.8}}{\varepsilon^{4.4}}.
\]
 \item [(B)] Our main result does not apply in this
   situation. Nonetheless, the data suggest that we still have
   convergence. The rows of Table \ref{tab4} indicate that for a fixed,
   small $h$ the  approximation for $\varepsilon$ to zero converges
   super-linear. A least-squares approach yields the estimate $k-0.5 =
   \sim 1.32$. To estimate the exponents in the second term of
   (\ref{errorrep}) is not feasible because there are too few values
   for a reasonable least-squares approximation.

\end{enumerate}
In both cases, the exponents for the first term are not too far off from the
value ($k-\frac{1}{2}$) that we can expect from Lemma
\ref{convolution} and Theorem
\ref{theorem_convolution_quadrature_stability} when ignoring the 
requirements on the regularity of the solution and the kernel. This
could be because the kernel and the solution are smooth except for a
finite number of points, so that errors that stem from the reduced
regularity at these points are comparatively small. The same might be
the reason why the parameters of the second term turn out to be better
than expected. 

In any case, this already indicates that the scheme should also be
useful when approximating solutions with lower regularity. This  would be 
particularly valuable in higher spatial dimensions where only very few
good methods are available to the present day. Some simple experiments
with shock formation seem to be promising and a detailed investigation
is planned for the future.

\bibliography{../../Utilities/rbf}
\end{document}